\documentclass{amsart}
\usepackage{amssymb,latexsym}
\usepackage{newlattice}

\DeclareMathOperator{\Princ}{Princ}
\DeclareMathOperator{\Frame}{Frame}
\DeclareMathOperator{\Prime}{Prime}
\DeclareMathOperator{\Flag}{Flag}
\DeclareMathOperator{\col}{col}

\DeclareMathOperator{\nat}{nat}
\DeclareMathOperator{\colset }{colSet}

\DeclareMathOperator{\Rep}{Rep}

\DeclareMathOperator{\CompLat}{CompLat}

\newcommand{\Jp}[1]{\tup{J}^+(#1)}

\newcommand{\gsum}{\overset{\text{\large \tbf .}}{+}} %glued sum

\newtheorem{theorem}{Theorem}
\newtheorem{lemma}[theorem]{Lemma}
\newtheorem{corollary}[theorem]{Corollary}
\newtheorem{definition}[theorem]{Definition}

\theoremstyle{definition} 
\newtheorem*{problem}{Problem}

\begin{document}
\title[Characterizing representability]
{Characterizing representability\\ 
by principal congruences\\
for finite distributive lattices\\
with a join-irreducible unit element}  
\author{George Gr\"{a}tzer} 
\email[G. Gr\"{a}tzer]{gratzer@me.com}
\urladdr[G. Gr\"{a}tzer]{http://server.math.umanitoba.ca/~gratzer/}

\date{\today}
\subjclass[2010]{Primary: 06B10.}
\keywords{congruence lattice, principal congruence,
join-irreducible congruence, finite distributive lattice,
principal congruence representable set.}

\begin{abstract}
For a finite distributive lattice $D$,  
let us call $Q \ci D$
\emph{principal congruence representable}, 
if there is a finite lattice $L$
such that the congruence lattice of $L$ 
is isomorphic to $D$ and the principal congruences of $L$ 
correspond to $Q$ under this isomorphism.

We find a necessary condition for representability
by principal congruences
and prove that for finite distributive lattices
with a join-irreducible unit element this condition is also sufficient. 
\end{abstract}

\maketitle

\section{Introduction}\label{S:Introduction}

\subsection{Background}\label{S:Background}
%Section~\ref{S:Background}
For a finite lattice $L$,
we denote by $\Con L$ the congruence lattice of $L$,
by $\Princ L$ the ordered set of principal congruences of $L$,
and by $\Prime L$ the set of prime intervals of $L$. 
Let $\J L$ denote the (ordered) set of 
join-irreducible elements of $L$, and let 
\begin{equation}\label{E:ji}%\eqref{E:ji}
   \Jp L = \set{0,1} \uu \J L.
\end{equation}
Then for a finite lattice $L$,
\begin{equation}\label{E:cont}%\eqref{E:cont}
   \Jp {\Con L} \ci \Princ L \ci \Con L,
\end{equation}
since every join-irreducible congruence is generated by a prime interval; furthermore, $\zero = \con{x,x}$ for any $x \in L$ and $\one = \con{0,1}$.

This paper continues G.~Gr\"atzer \cite{gG14}
(see also \cite[Section 10-6]{LTS1} and \cite[Part VI]{CFL2}),
whose main result is the following statement. 

\begin{theorem}\label{T:bounded}%Theorem~\ref{T:bounded}
Let $P$ be a bounded ordered set.
Then there is a bounded lattice~$K$ such that $P \iso \Princ K$.
If the ordered set $P$ is finite, 
then the lattice $K$ can be chosen to be finite.
\end{theorem}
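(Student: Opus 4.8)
The plan is to build $K$ by the colored-lattice method. Every prime interval of $K$ will be assigned a \emph{color} from $P$, so that a prime interval colored $p$ generates a join-irreducible congruence $\gamma_p$, the map $p \mapsto \gamma_p$ is an order embedding of $P$ into $\Con K$ with $\gamma_0=\zero$ and $\gamma_1=\one$, and the principal congruences of $K$ are exactly $\set{\gamma_p : p \in P}$. Since a congruence of a finite lattice is join-irreducible precisely when it is generated by a prime interval (as noted after \eqref{E:cont}), prescribing colors prescribes $\J{\Con K}$; the real difficulty lies not in the join-irreducible congruences but in controlling which \emph{intervals} $[u,v]$, of length possibly greater than one, generate a principal congruence at all.

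First I would fix a \emph{frame} lattice $\Frame P$ carrying exactly one prime interval $\mathfrak{p}_p$ of each interior color $p \in P\setminus\set{0,1}$, designed so that its principal congruences are precisely $\zero$, $\one$, and the pairwise incomparable join-irreducible congruences $\gamma_p = \con{\mathfrak{p}_p}$. The colored prime intervals are to be spread out so that no two distinct colors co-occur in any interval except the spanning one, whose congruence is $\one$; then every join $\gamma_p \vee \gamma_q$ of incomparable colors lives in $\Con(\Frame P)$ but is generated by no interval, so it is not principal. Thus $\Frame P$ realizes the underlying set of $P$ with its order temporarily discarded.

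Next, for each covering $q \cov p$ of $P$ I would splice a copy of the forcing gadget $\seight$ linking $\mathfrak{p}_p$ to $\mathfrak{p}_q$. The gadget $\seight$ is used for its one-directional behavior: inside it, collapsing the upper edge forces the lower edge to collapse but not conversely, so the insertion imposes $\con{\mathfrak{p}_p} \le \con{\mathfrak{p}_q}$, that is $\gamma_p \le \gamma_q$, and nothing more. Ranging over all coverings transports the full order of $P$ onto the containment order of the $\gamma_p$; coverings at the top and bottom of $P$ need no gadget, since $\gamma_p \le \one$ and $\zero \le \gamma_p$ hold automatically. Each covering costs one finite gadget, so for finite $P$ only finitely many are used and $K$ is finite, which gives the second assertion.

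The step I expect to be the main obstacle is the verification of \emph{exactness}: that the assembled $K$ has no principal congruence outside $\set{\gamma_p : p \in P}$. This splits into a local and a global claim. Locally, each $\seight$-insertion must create no prime-perspectivity between incomparable colors and must not fuse the frame's separated colors, so that $p \mapsto \gamma_p$ remains an order embedding and $\J{\Con K}=\set{\gamma_p}$ is preserved; this is a gadget-by-gadget computation of $\con{\cdot}$ on the finitely many new prime intervals. Globally, one must show that for every interval $[u,v]$ the congruence $\con{u,v}=\bigvee\set{\gamma_c : c \text{ occurs in } [u,v]}$ is again some designated $\gamma_p$. This is arranged by placing the colored prime intervals so that the colors met inside any interval possess a least upper bound in $P$ realized by the embedding: colors occurring together in a short interval form a chain, whose join is their top color, while an interval mixing incomparable colors is forced to be spanning and hence to generate $\one=\gamma_1$. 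Ruling out every interval that would instead manufacture a join of incomparable colors equal to no designated $\gamma_p$, and confirming that each $\gamma_p$ is genuinely principal, is the heart of the argument; the order isomorphism $\Princ K \iso P$ then follows at once.
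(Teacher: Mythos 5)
Your overall plan --- a frame lattice whose colored prime intervals prescribe the join-irreducible congruences, one-directional forcing gadgets to impose comparabilities, and a final ``exactness'' check that no unintended principal congruences arise --- is in outline the strategy of the actual proof. Note, however, that the present paper does not prove Theorem~\ref{T:bounded} at all: it quotes it as the main result of \cite{gG14}, and the constructions here ($\Frame C$, and the lattices $W(p,q)$, described as variants of the gadgets $S(p,q)$ of \cite{gG14}) are adaptations of that proof to a different problem.

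There is one genuine gap, and it kills the first (infinite) assertion of the theorem: you insert a forcing gadget only for each \emph{covering} $q \cov p$ of $P$ and recover the remaining comparabilities ``by transitivity.'' That recovers the order of $P$ only when the order is the reflexive-transitive closure of the covering relation, which holds for finite $P$ but fails for bounded ordered sets in general. For instance, let $P$ be the set of rational numbers in the real interval $[0,1]$ with the usual order: this is a bounded ordered set whose covering relation is \emph{empty}, so your construction inserts no gadgets whatsoever, and $\Princ K$ comes out as $\zero$, $\one$, and an antichain of congruences $\gamma_p$ --- not isomorphic to $P$. The repair is to adjoin one gadget for \emph{every} comparable pair $p < q$ of $P$, not only for coverings, so that each comparability is forced directly rather than through covering chains; this is exactly what \cite{gG14} does, and it is mirrored in the present paper's construction \eqref{E:W}, where $W(p,q)$ is adjoined for all pairs $p < q \in P$. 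For finite $P$ this still uses only finitely many finite gadgets, so the second assertion is unaffected; the ``exactness'' verification you defer then becomes the gadget-by-gadget computation you anticipate, complicated only by the fact that gadgets for pairs sharing an endpoint overlap in the prime intervals $[a_p, b_p]$, which is where the one-directional behavior of the gadget is really needed.
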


The bibliography lists a number of papers related to this result. 

In G. Gr\"atzer and H. Lakser~\cite{GLa}, 
we got some preliminary results 
for the following related problem.

For a finite distributive lattice $D$,  
let us call $Q \ci D$
\emph{principal congruence representable}
(\emph{representable}, for short), 
if there is a finite lattice $L$
such that $\Con L$
is isomorphic to $D$ and $\Princ L$ 
corresponds to $Q$ under this isomorphism.
Note that by \eqref{E:ji} and \eqref{E:cont},
if $Q$ is representable, then $\J D \ci Q$ and $0, 1 \in Q$. 

We now state \cite[Problem 22.1]{CFL2}.

\begin{problem}\label{P:main}%Problem~\ref{P:main}
Characterize representable sets for finite distributive lattices.
\end{problem}
 
In this paper, we investigate a combinatorial condition 
for representability.
We prove that this condition is necessary, and for 
finite distributive lattices with a~join-irreducible unit,
it is also sufficient.

\subsection{Chain representability}\label{S:repr}
%Section~\ref{S:repr}

A finite chain $C$ is \emph{colored by}
an ordered set $P$, 
if~there is a map $\col$
of $\Prime C$ \emph{onto} $P$. 
For $x \leq y \in C$, define the \emph{color set}, 
denoted by $\colset [x, y]$, of an interval $[x, y]$ of $C$ 
as the set of colors of the prime intervals~$\fp$ in $[x, y]$; in formula,
\begin{equation}\label{E:col}%\eqref{E:col}
   \colset [x, y] = \setm{\col\fp}{\fp \in \Prime [x, y]}.
\end{equation}

Note that if $\col \fp = p \in P$, then $\colset  \fp = \set{p}$. 

We use Figure~\ref{F:Repr} to illustrate coloring. 
Figure~\ref{F:Repr} shows an ordered set~$P$ and a~chain 
$C = \set{c_1 \prec c_2 \prec c_3 \prec c_4 \prec c_5}$ 
colored by $P$; the color of a prime interval
is indicated by a label.
Let $D$ be a distributive lattice  
with~$P$ as the ordered set $\J D$ of join-irreducible elements
of~$D$; the elements of $\J D$ 
are gray-filled in~the diagram of $D$ in~Figure~\ref{F:Repr}.

For the color sets of prime intervals of this chain $C$, 
we obtain $\set{a}$, $\set{b}$, and $\set{d}$.
Intervals of length $2$ of $C$ produce two more color sets
$\set{a, b}, \set{b, d}$; 
for instance, $\colset [c_1, c_3] = \set{a, b}$ and 
$\colset [c_2, c_4] = \set{b, d}$. 
There are two intervals of length~$3$, 
but only $[c_1, c_4]$ yields a new color set:
$\colset [c_1, c_4] = \set{a, b, d}$. Finally,
$C = [c_1, c_5]$ gives the same color set as $[c_1, c_4]$.

While the map $\colset$ assigns a color set to an interval of $C$,
the map $\Rep$ assigns an element of $D$ to an interval of $C$:
\[
   \Rep \colon [x, y] \mapsto \JJ\colset [x, y].
\]
Note that $\colset  [x, x] = \es$, so $\Rep[c_1, c_1] = o$;
also, $\Rep[c_1, c_2] = a$, and so on,
as illustrated in Figure~\ref{F:Repr}.

\begin{figure}[b!]%Figure~\ref{F:Repr}
\centerline{\includegraphics[scale=1]{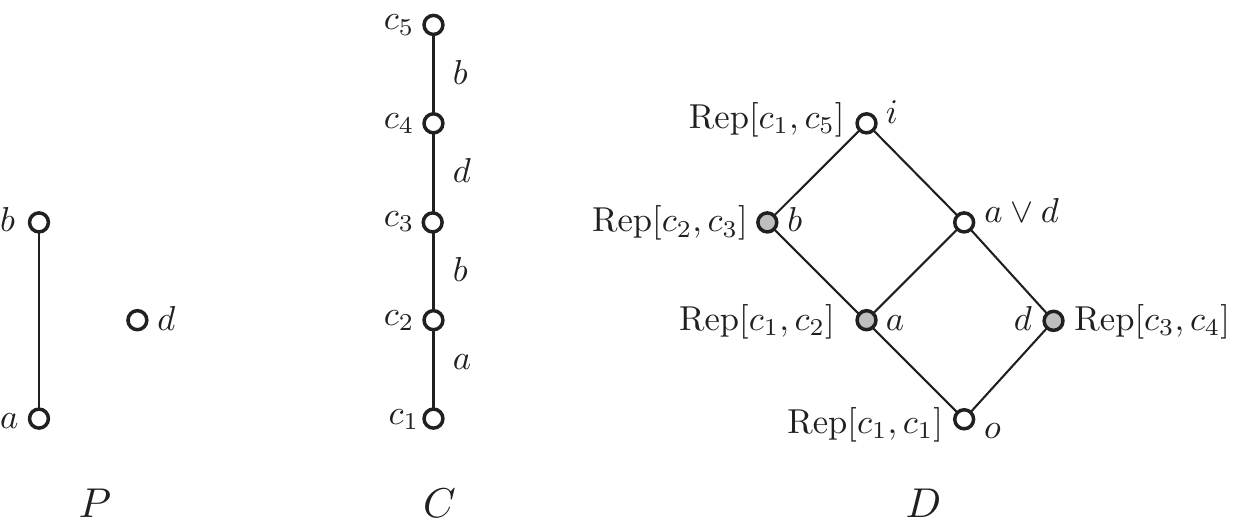}}
\caption{Coloring and representation}\label{F:Repr}
\end{figure}

The set 
\begin{equation}
   \Rep C = \setm{\Rep[x, y]}{x \leq y \in C}
\end{equation}
is a subset of $D$; in our example,
\[
   \Rep C = \set{o, a, b, d, b \jj d} = D - \set{a \jj d}.
\]
Such a subset of $D$ we call \emph{\lp colored-\rp{}chain representable}.
Note that the chain $C$ is not directly related 
to the finite lattice $L$ representing $D$ as a congruence lattice.

Now we state our first result.

\begin{theorem}\label{T:main}%Theorem~\ref{T:main}
Let $D$ be a finite distributive lattice and let $Q \ci D$.
If $Q$ is representable, then it is chain representable. 
\end{theorem}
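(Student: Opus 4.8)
The plan is to build the required colored chain directly from a finite lattice $L$ that represents $Q$, so that $\Con L \iso D$ and $\Princ L$ corresponds to $Q$. Under this isomorphism the poset $P = \J D$ is identified with $\J \Con L$, and since every join-irreducible congruence is of the form $\con{\fp}$ for a prime interval $\fp$, the natural colors are exactly the prime-interval congruences. The guiding identity is $\con{x,z} = \con{x,y} \jj \con{y,z}$ for $x \le y \le z$ in $L$: along any chain of $L$, the join of the colors of a contiguous block of prime intervals is again a \emph{principal} congruence. Hence, if a colored chain is assembled so that each of its windows \lp the color set of one of its intervals\rp{} is realized by an actual interval of $L$, then automatically $\Rep C \ci \Princ L = Q$.

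First I would fix, for each $q \in Q$, a witnessing interval $[a_q, b_q]$ of $L$ with $\con{a_q, b_q} = q$, extend a maximal chain of $[a_q, b_q]$ to a maximal chain $G_q$ of $L$ running from $0$ to $1$, and color its prime intervals by their congruences. Inside the block $G_q$ the contiguous window from $a_q$ to $b_q$ has color-set join $q$, so $q \in \Rep C$; adjoining in addition one block through each prime interval of $L$ makes the coloring onto $P$. Concatenating all of these blocks into one long abstract chain $C$ therefore yields $Q \ci \Rep C$ together with surjectivity of the coloring, and every window lying inside a single block contributes a principal congruence.

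The main obstacle is the windows that straddle a boundary between two consecutive blocks: such a window is the union of a terminal segment of one block and an initial segment of the next, and in a general lattice the join of the two resulting principal congruences need not be principal, so it could fall outside $Q$. The device that overcomes this is to lay the blocks down with \emph{alternating orientation} \lp a block may be read from $0$ to $1$ or from $1$ to $0$, since a window sees only the \emph{set} of its colors\rp, arranged so that at every boundary the two meeting segments are either both upper segments or both lower segments of their blocks. A straddling window at an upper boundary then has value $\con{u, 1} \jj \con{v, 1}$; here $u \equiv 1$ and $v \equiv 1$ force $u \wedge v \equiv 1$, while $[u, 1], [v, 1] \ci [u \wedge v, 1]$ give the reverse inequality, so the value collapses to the principal congruence $\con{u \wedge v, 1}$. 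The dual computation handles a lower boundary, giving $\con{0, u \jj v}$. Finally, any window meeting three or more blocks contains some block entirely, so its value is $\ge \con{0,1} = \one$ and hence equals $\one \in Q$.

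Putting these together, every window of $C$ takes a value in $Q$, so $\Rep C \ci Q$; combined with $Q \ci \Rep C$ and surjectivity of the coloring this gives $\Rep C = Q$, exhibiting $Q$ as chain representable. The only genuinely delicate point is the boundary analysis of the previous paragraph, and it is the orientation-alternation trick, together with the identity $\con{u,1} \jj \con{v,1} = \con{u \wedge v, 1}$ and its dual, that makes the argument go through.
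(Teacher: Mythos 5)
Your proposal is correct and follows essentially the same route as the paper: the paper likewise glues maximal chains of $L$ end-to-end with alternating orientation (a glued sum of the chains and their duals), colors prime intervals by their congruences, handles straddling windows via $\con{u,1} \jj \con{v,1} = \con{u \mm v, 1}$ (and dually), and sends windows spanning three or more blocks to $\one$. The only cosmetic difference is that the paper takes \emph{all} maximal chains of $L$ as blocks, whereas you select one block per element of $Q$ plus extra blocks to make the coloring onto; both choices work for the same reasons.
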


\subsection{Finite distributive lattices 
with join-irreducible units}

Let $D$ be a finite distributive lattice
with a join-irreducible unit. 
Then we can apply to $D$ the construction in my paper \cite{gG14}
to obtain a finite lattice $L$ with very special properties
whose congruence lattice is isomorphic to $D$.
Using this as a starting point, we prove our second result.

\begin{theorem}\label{T:main2}%Theorem~\ref{T:main2}
Let $D$ be a finite distributive lattice with a join-irreducible
unit element. Let $Q \ci D$.
Then $Q$ is representable if{}f it is chain representable. 
\end{theorem}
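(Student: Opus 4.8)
The plan is to prove both directions of the biconditional in Theorem~\ref{T:main2}. The necessity direction is already supplied by Theorem~\ref{T:main}, so the entire work lies in proving sufficiency: given a finite distributive lattice $D$ with a join-irreducible unit and a chain representable set $Q \ci D$, I must construct a finite lattice $L$ with $\Con L \iso D$ and $\Princ L$ corresponding to $Q$.

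First I would unpack the hypothesis that $Q$ is chain representable: this yields a finite chain $C$ together with a coloring $\col \colon \Prime C \to \J D$ such that $\Rep C = Q$. The point of this datum is that it encodes, via color sets of intervals, exactly which joins $\JJ \colset [x,y]$ are allowed to appear. The engine of the construction should be the lattice $L_0$ produced by applying the construction of \cite{gG14} to $D$, which (because the unit of $D$ is join-irreducible) has $\Con L_0 \iso D$ and enjoys the special structural properties promised in the statement of the subsection — in particular a tight, controllable correspondence between prime intervals of a distinguished chain inside $L_0$ and the join-irreducible congruences $\J D$. My plan is to use the chain $C$ as a template that tells me which principal congruences to \emph{force into existence} and, just as importantly, which elements of $D$ must \emph{fail} to be principal.

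The key steps, in order, are as follows. Start from the construction of \cite{gG14} to realize $D$ as $\Con L_0$ with the needed special properties, so that each prime interval of $L_0$ is colored by an element of $\J D$ matching the coloring setup. Next, for each interval $[x,y]$ of the chain $C$, the element $\Rep[x,y] = \JJ \colset[x,y]$ should be made principal by building into $L$ an interval whose congruence-generating prime intervals carry exactly the colors in $\colset[x,y]$; concatenating or amalgamating copies of the colored chain $C$ into $L_0$ (gluing along suitable endpoints so as not to disturb $\Con L_0 \iso D$) realizes every element of $\Rep C = Q$ as a principal congruence $\con{a,b}$. The join-irreducibility of the unit of $D$ is what guarantees there is room to perform these gluings without collapsing the congruence lattice, since it controls the top of $D$ and keeps the construction of \cite{gG14} rigid. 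Finally I would verify the converse inclusion $\Princ L \ci Q$: using \eqref{E:cont}, $\J D \ci \Princ L$ automatically and $0,1 \in \Princ L$, so the content is to check that no element of $D \setminus Q$ becomes principal; this follows because any principal congruence of $L$ is generated by a prime interval $\fp$, whose congruence is $\JJ$ of the colors realized along a path from the bottom to the top of $\con{\fp}$, and by construction every such realizable join already lies in $\Rep C = Q$.

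The hard part will be this last verification — showing $\Princ L \ci Q$, i.e.\ that the gluing introduces \emph{no unwanted} principal congruences. It is easy to force elements of $Q$ to be principal; the delicate matter is controlling collateral joins of colors that arise along the new chains and their interactions with $L_0$, and ensuring that the congruence lattice remains exactly $D$ rather than something larger. I expect this to require a careful bookkeeping of which color sets $\colset[x,y]$ actually occur as one ranges over all intervals of $C$ and all prime intervals of the glued lattice $L$, together with an argument that the closure of $Q$ under these operations is $Q$ itself — which is precisely the combinatorial content encoded in the definition of $\Rep C$. Making the gluing precise so that $\Con L$ is provably still isomorphic to $D$ (and not merely surjects onto it) is where I anticipate the main technical obstacle.
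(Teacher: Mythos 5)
Your proposal correctly reduces the theorem to the sufficiency direction via Theorem~\ref{T:main}, and your instinct to start from the construction of \cite{gG14} matches the paper's stated starting point. But there is a genuine gap: the construction itself is missing, and the specific route you sketch --- gluing copies of the colored chain $C$ into the lattice $L_0$ of \cite{gG14} ``along suitable endpoints so as not to disturb $\Con L_0 \iso D$'' --- would fail as stated. When you insert a new chain into a lattice, each new prime interval $[c_i, c_{i+1}]$ generates its own congruence, and nothing in a mere gluing ties $\con{c_i, c_{i+1}}$ to the join-irreducible congruence named by its color; generically you enlarge the congruence lattice rather than preserve it. The entire technical content of the paper is the pair of gadgets that solve exactly this problem: the flag lattices $\Flag(c_i)$, which force $\con{c_i, c_{i+1}} = \con{a_p, b_p}$ where $p = \col[c_i, c_{i+1}]$, and the lattices $W(p,q)$, which force $\con{a_p, b_p} \leq \con{a_q, b_q}$ for $p < q$ in $P$. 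You name the danger (``no unwanted principal congruences,'' ``$\Con L$ remains exactly $D$'') but offer no mechanism to avert it; anticipating the obstacle is not the same as overcoming it.

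For comparison, the paper does not glue into $L_0$ at all. It builds $L$ from scratch as a union of component lattices pairwise intersecting in small chains: a frame $\Frame C$ (bounds $o, i$; elements $a_p < b_p$ for $p \in P$; a sectional complement $s_1$; a universal complement $u$; and the chain $C$ itself), extended by the $W(p,q)$ and the $\Flag(c_i)$. The verification then runs through a compatibility lemma (Lemma~\ref{L:comp}): a congruence of $L$ is exactly a compatible family of internal congruences of the components, whence the join-irreducible congruences of $L$ are precisely the $\con{a_r, b_r}$ for $r \in P$, giving $\Con L \iso D$; and every principal congruence $\con{x,y}$ with $x, y$ in one component is either $\one$, or some $\con{a_p, b_p}$, or --- when $x$ and $y$ lie in $C$ or its copy inside a flag lattice --- equal to $\JJ\colset[x,y] \in Q$, which yields that $\Princ L$ corresponds to $Q$. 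Note also that the join-irreducibility of the unit plays a different role than the one you assign it: it guarantees $1 \in \J D$, so that the unit congruence is itself of the form $\con{a_1, b_1}$, some prime interval of $C$ carries color $1$, and the frame (with its sectional complement $s_1$) can be built at all; it is not about ``room to glue without collapsing.''
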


\subsection{Outline} In Section~\ref{S:representable}, 
we prove Theorem~\ref{T:main}.
Section~\ref{S:PbP} provides a \emph{Proof-by-Picture}
of Theorem~\ref{T:main2}. In view of Theorem~\ref{T:main},
to prove Theorem~\ref{T:main2}, 
it is sufficient to verify 
that if $Q \ci D$ is chain representable, 
then there is a lattice $L$ representing it.
This lattice $L$ is constructed in Section~\ref{S:Construction}.
Finally, in Section~\ref{S:main2}, we prove Theorem~\ref{T:main2}.
Section~\ref{S:Discussion} concludes the paper 
with a discussion of some very recent results. 

I would like to thank the referee 
for the many improvements he recommended.

\subsection{Notation}
We use the notation as in \cite{CFL2}.
You can find the complete

\emph{Part I. A Brief Introduction to Lattices} and  
\emph{Glossary of Notation}

\noindent of \cite{CFL2} at 

\verb+tinyurl.com/lattices101+

\section{Proving Theorem~\ref{T:main}}\label{S:representable}%Section~\ref{S:representable}

Let $D$ be a finite distributive lattice and let $Q \ci D$.
Let $P$ denote the set of join-irreducible elements of $D$.
Finally, let $Q$ be representable by a finite lattice~$L$,
with bounds $0$ and $1$, 
so $\Con L = D$ and $Q$ is the set of principal congruences
of $L$.

If $C$ is a maximal chain of $D$, then $C$ has a \emph{natural coloring in $L$}: $\nat_L \fp = \con{\fp}$.

Let $K$ be a finite lattice and let $C$ be a maximal chain of $K$.
Then 
\[
   \nat_K(\Prime C) \ci \nat_K(\Prime K);
\] 
note that $\nat_K (\Prime C)$ is a proper subset, in general. 
However,
\begin{equation}\label{E:prime}%\eqref{E:prime}
   \JJ \nat_K (\Prime K) = \JJ \nat_K (\Prime C) = \one.
\end{equation}

Let $P_1$ and $P_2$ be ordered sets. 
Recall that $P_1 + P_2$ denotes the (ordinal) \emph{sum} 
of $P_1$ and $P_2$ ($P_2$ on top of $P_1$).  
If $P_1$ has a unit, $1_{P_1}$, 
and $P_2$ has a zero, $0_{P_2}$, then 
we obtain the \emph{glued sum} $P_1 \gsum P_2$ 
from $P_1 + P_2$ by identifying $1_{P_1}$ and $0_{P_2}$.

Now to prove Theorem~\ref{T:main}, 
we enumerate all maximal chains of the lattice $L$: 
$C_1, C_2, \dots, C_m$.
Let 
\[
   C_i'= 
   \begin{cases}
      C_i \text{\q for $i$ odd;}\\
      \widetilde C_i \text{\q for $i$ even,}
   \end{cases}
\]
where $\widetilde C_i$ denotes the dual of $C_i$ and define
the chain $C$ as a glued sum:
\[
   C = C_1' \gsum \dots \gsum C_m'.
\]

We have a coloring $\col$ for $C$: 
if $\fp$ is a prime interval in $C$, 
then $\fp$ is a~prime interval in exactly one $C_i'$. 
Let $\fp' = \fp$ if $i$ is odd and let $\fp' = \tilde\fp$ 
be the dual of $\fp$ if $i$ is even.  
Since $C_i$ is a maximal chain in $L$, it follows that
$\fp'$ is a prime interval in~$L$.
Then $\col \fp = \con{\fp'} \in P$ 
defines a coloring of $C$.
So for $[x,y] \ci C$, we obtain the color set
$\colset [x, y] = \setm{\col \fp}{\fp \in \Prime [x, y]}$.
We~define
\begin{equation}
   \Rep C = \setm{\JJ(\colset [x,y])}{[x,y] \ci C},
\end{equation}
a subset of $D$.

To prove Theorem~\ref{T:main}, we have to establish that $\Rep C = Q$.

To verify that $\Rep C \ce Q$, 
let $x \in Q$. 
By the definition of $D$, $Q$, and~$L$, 
we can represent $x$ as a principal congruence
$\con{a,b}$ in~$L$ for some $a \leq b \in L$. 
Let $C_i$ be one of the maximal chains in $L$ with $a, b \in C_i$.
Applying \eqref{E:prime} to the interval $[a, b]$ of~$L$, 
we obtain that
\[
   \JJ(\colset_{C_i} [a,b])= \JJ(\colset_{C} [a,b]) = x.
\]  
Therefore, $x \in \Rep C$.

Conversely, to verify that $\Rep C \ci Q$,
let $x = \Rep C$.
Then there are $u \leq v \in C$ 
such that $x =\JJ(\colset_{C_i} [u,v])$.
We distinguish three cases.

Case 1. $u \leq v \in C_i$ for some $1 \leq i \leq n$.
This is easy, just like the converse case, 
utilizing \eqref{E:prime}.

Case 2.  $u \in C_i'$, $v \in C_j'$ for $1 \leq i + 1 < j \leq n$.
In this case, $[u, v] \ce C_{i+1}'$
and so~$x = \one \in \Rep C$.

Case 3. $u \in C_i'$, $v \in C_{i+1}'$ for some $1 \leq i < n$.
Without loss of generality, 
we can assume that $i$ is odd, 
so $C_i' = C_i$ and $C_{i+1}' = \widetilde C_{i+1}$.
Then 
\begin{align*}
x = \JJ(\colset_{C} [u,v]) &= 
     \JJ(\colset_{C_i'} [u,1_{C_i'}]) \jj 
     \JJ(\colset_{C_{i+1}'} [0_{C_{i+1}'},v]) \\
      &= \JJ(\colset_{C_i'} [u,1]) \jj
     \JJ(\colset_{C_{i+1}'} [1,v]) \\      
      &=\con{u, 1} \jj \con{ v, 1}
      = \con{u \mm v, 1} \in Q,
\end{align*}
which we wanted.

This completes the proof of Theorem~\ref{T:main}.

\section{Finite distributive lattices 
with join-irreducible units\\\textsc{``Proof-by-Picture''}}
\label{S:PbP}
%Section~\ref{S:PbP}
\subsection{A colored chain}

Recall that, as in \cite{CFL2},
a \emph{Proof-by-Picture} is not a proof, 
just an illustration of an idea.
We illustrate the proof of Theorem~\ref{T:main2}
with the chain~$C$ 
colored by the ordered set $P = \set{p,q,r,1}$
and the distributive lattice $D$
with a join-irreducible unit satisfying $\J D = P$,
see Figure~\ref{F:PbP}. Note that $1 \in P$;
let $P^* = P-\set{1}$.

\begin{figure}[b!]%Figure~\ref{F:PbP}
\centerline{\includegraphics[scale = 1]{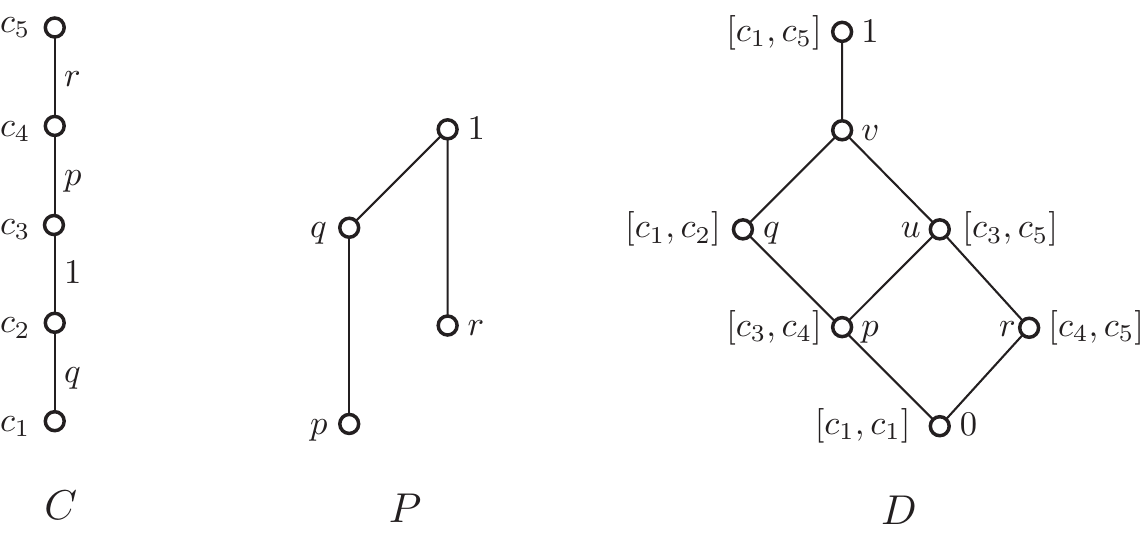}}
\caption{Illustrating the proof of Theorem~\ref{T:main2}}\label{F:PbP}
\end{figure}

As in Figure~\ref{F:Repr}, we mark an element $z \in D$
with the interval $[x, y]$ of $C$, if 
\[
   z = \Rep[x, y] = \JJ\colset[x, y],
\] 
that is, if the element $z \in D$ 
is the join of the colors in $[x, y]$. 
All the elements thus marked form the set $Q = D - \set{v}$.
By definition, $Q$ is chain representable.

We will outline how to construct a finite lattice $L$ 
such that $\Con L$ is isomorphic to~$D$ and $\Princ L$ 
corresponds to $Q$ under this isomorphism, that is,
$Q$ is representable.

For a finite lattice $K$ with zero, $o$ and unit, $i$,
we denote by $K^-$ the ordered set obtained 
by deleting the elements $o$ and $i$ from $K$.

\subsection{The frame lattice}\label{S:Frame}
%Section~\ref{S:Frame}

For the chain $C$ colored by the ordered set $P$, 
see Figure~\ref{F:PbP},
we first construct the \emph{frame lattice} of $C$, $\Frame C$, 
as illustrated in Figure~\ref{F:Frame}, 
consisting of the following elements:
\begin{enumerate}
\item the elements $o$, $i$, 
the zero and unit of $\Frame C$, respectively;
\item the elements $a_p < b_p$ for every $p \in P$;
\item an element $s_1$, a sectional complement of $a_1$ in $b_1$,
that is, $s_1 \mm a_1 = o$ and $s_1 \jj a_1 = b_1$;
\item the chain $C$;
\item a universal complement $u$, that is, $u \mm x = o$
and $u \jj x = i$ for every $c \in (\Frame C)^-$.
\end{enumerate}

These elements are ordered and
the lattice operations are formed as in Figure~\ref{F:Frame}
(which shows the construction 
for the colored chain $C$ of Figure~\ref{F:PbP}). 

Note that $\Frame C$ is a union of $\set{0,1}$-sublattices:
the chains $C_p = \set{o, a_p, b_p, i}$, for $p \in P$, $C$, 
the chain $C_u = \set{o, u, i}$,
and the additional nonchain $\set{0,1}$-sublattice 
$S = \set{o, a_1, b_1, s_1, i}$.

The frame lattice $\Frame C$ in this paper is based on the idea 
of the frame lattice in G.~Gr\"atzer \cite{gG14};
the details are different, 
especially, the inclusion of the chain $C$.

\begin{figure}[htb]
\centerline{\includegraphics[scale=0.60]{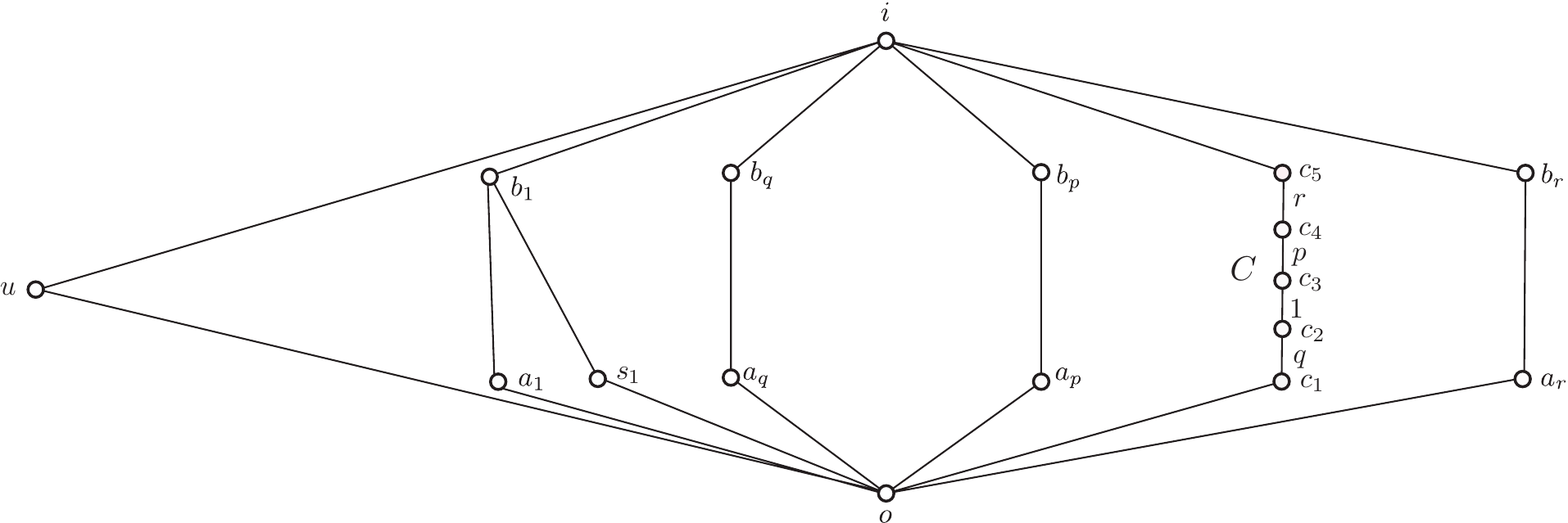}}
\caption{The frame lattice $\Frame C$
for the colored chain $C$ of Figure~\ref{F:PbP}}
\label{F:Frame}%Figure~\ref{F:Frame}
\end{figure}

\begin{figure}[htb]
\centerline{\includegraphics[scale=0.9]{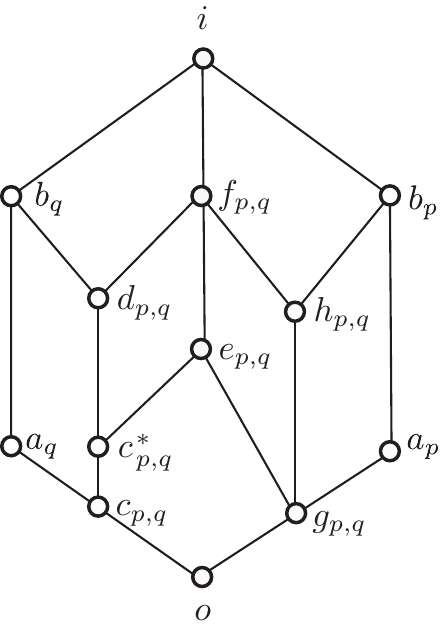}}%Figure~\ref{F:W}
\caption{The lattice $W(p, q)$ for $p < q \in P$}\label{F:W}

\bigskip

\bigskip

\centerline{\includegraphics[scale=.65]{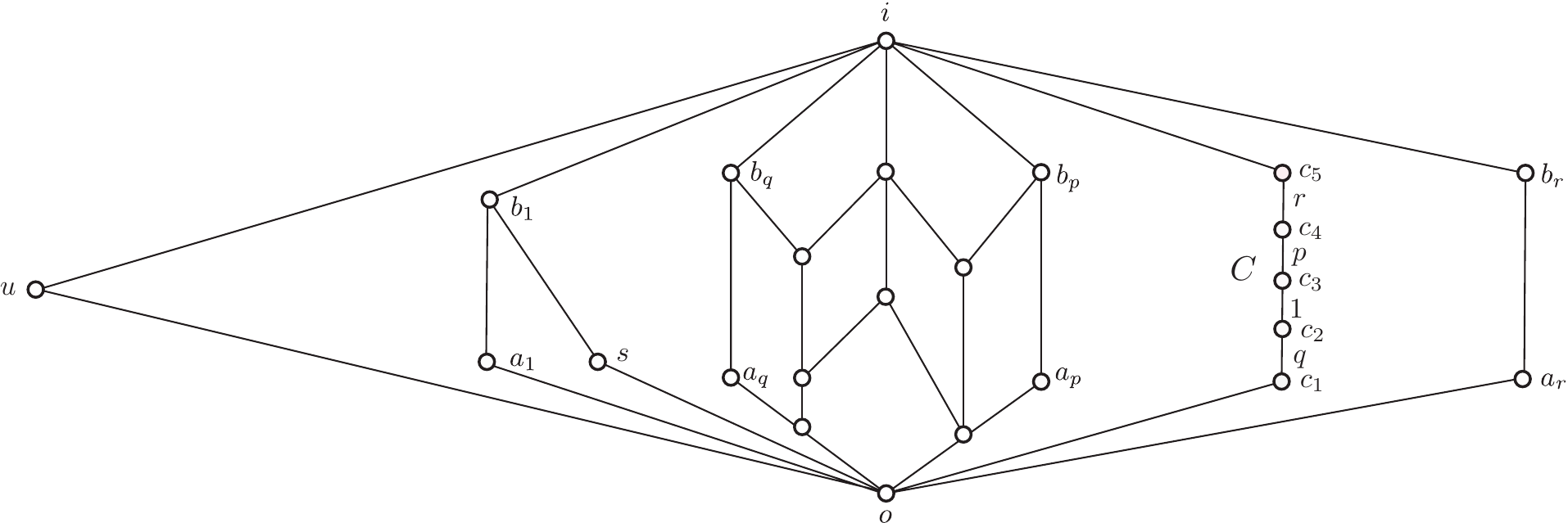}}%Figure~\ref{F:Frame+W}
\caption{Adding $W(p,q)$ to $\Frame C$ for $p < q \in P$ with $q < 1$}\label{F:Frame+W}
\end{figure}

\subsection{The ordered set W}

We are going to construct the lattice $L$ 
representing $Q \ci D$ 
as an extension of the frame lattice of $C$, $\Frame C$. 
The principal congruence $\con{a_p, b_p}$ of $L$ 
represents $p \in P$.

We use the lattice $W(p,q)$, for $p < q \in P$,
see Figure~\ref{F:W}.
We~add these as sublattices to extend~$\Frame C$.

The lattice $W(p,q)$ is a variant of the lattice $S(p, q)$
in my paper \cite{gG14}. The lattice $W(p,q)$ has two more elements than $S(p, q)$, but from a technical point of~view it is much easier to work with. 
For instance, the crucial formula \eqref{E:Lequ} 
does not hold if we utilize the lattices $S(p, q)$.

Since $p < q$ in $P$, 
we want $\con{a_p, b_p} < \con{a_q, b_q}$
to hold in the extended lattice.
We add seven elements to the sublattice $C_p \uu C_q$
of $\Frame C$, as illustrated in Figure~\ref{F:Frame+W},
to form the sublattice $W(p,q)$. 
This will ensure that $\con{a_p, b_p} \leq \con{a_q, b_q}$.

\subsection{Flag lattices}\label{S:flag}%Section~\ref{S:flag}
Figure~\ref{F:Flag} shows a flag lattice $\Flag(c_3)$,
where $C$ is the colored chain of Figure~\ref{F:PbP}
and $\col [c_3, c_4] = p$.

We add eight elements (black filled in Figure \ref{F:Flag}) 
to the sublattice $\set{o, a_p, b_p, c_3, c_4, i}$
of $\Frame C$, as illustrated in Figure \ref{F:AddingFlag},
to form the sublattice $\Flag(c_i)$. 
This extension ensures that $\con{a_p, b_p} = \con{c_3, c_4}$,
where $\col [c_3, c_4] = p$.
Note that if $i \neq j$, then $\Flag(c_i)\ii\Flag(c_j) = C \uu \set{o,i}$.

Similarly, we add $\Flag(c_1), \Flag(c_2), \Flag(c_4)$
to form $L$. We will not draw this extension 
because even with the diagram the resulting lattice
is hard to visualize.

\begin{figure}[hbt]
\centerline{\includegraphics[scale=0.86]{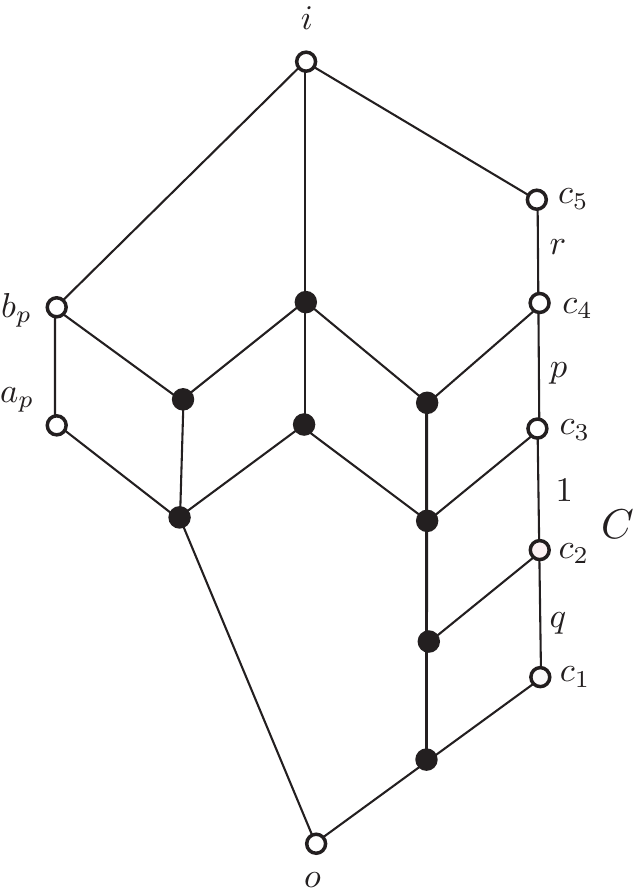}}%Figure~\ref{F:Flag}
\caption{The lattice $\Flag(c_3)$}\label{F:Flag}
\end{figure}

\begin{figure}[htp]%Figure~\ref{F:AddingFlag}
\centerline{\includegraphics[scale=.65]{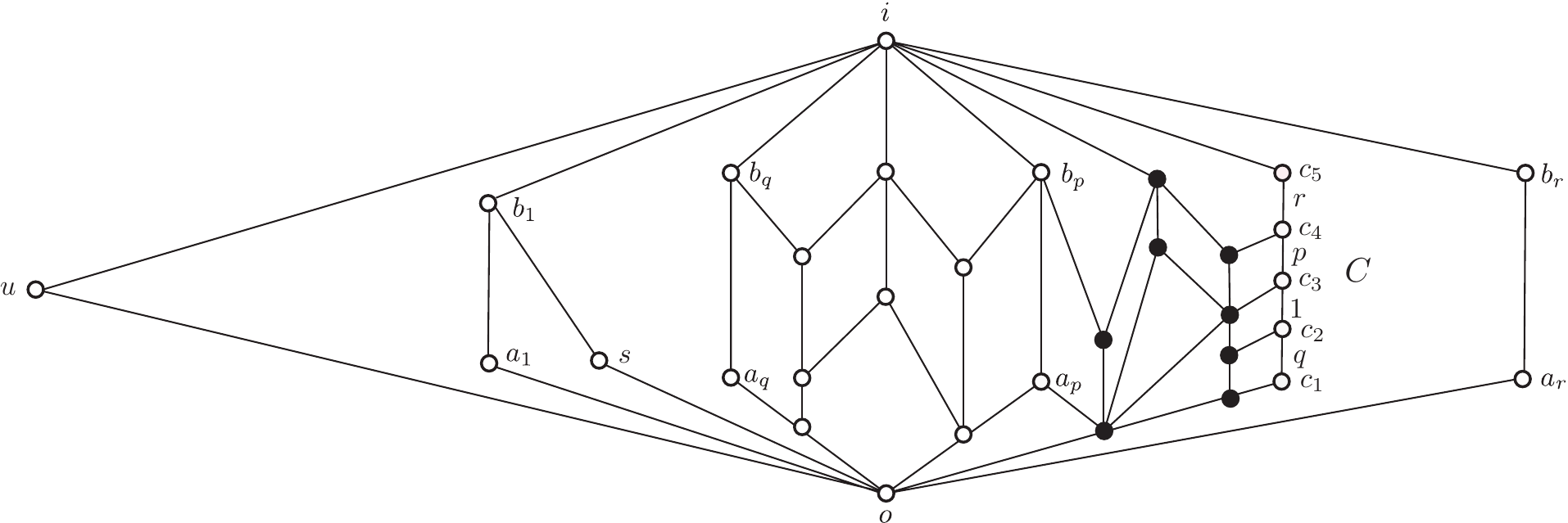}}
\caption{Further adding $\Flag(c_3)$}\label{F:AddingFlag}
\end{figure}

\subsection{The role of $C$}
It follows that $\con{a_p, b_p} \jj \con{a_r, b_r}$
is principal in $L$. Indeed
\[
   \con{a_p, b_p} \jj \con{a_r, b_r} = \con{c_3, c_5}.
\]

On the other hand, $\con{a_p, b_p} \jj \con{a_q, b_q}$ 
is not principal since there is no interval $[x,y]$ in $C$ 
such that  $\colset[x,y] = \set{p,q}$.

\section{Construction}\label{S:Construction}
%Section~\ref{S:Construction}

Let $D$ be a finite distributive lattice 
with a join-irreducible unit element and let $P = \J D$. 
We can assume that $|D| > 2$, 
because Theorem~\ref{T:main2} is trivial if $|D| \leq 2$.
Let $Q \ci D$ be representable 
by the chain $C = \set{c_1 \prec c_2 \prec \dots \prec c_n}$ 
colored by $P$. 
Note that $1 \in P$, so there is at least one prime interval in $C$ colored by $1$.

In this section, we construct a finite lattice $L$ such that
$\Con L$ is isomorphic to $D$ and $\Princ L$
corresponds to $Q$ under this isomorphism,
as required by Theorem~\ref{T:main2}. 
In view of Theorem~\ref{T:main}, this construction of the lattice $L$ and the verification of its properties in Section~\ref{S:main2},
will complete the proof of Theorem~\ref{T:main2}.  

\subsection{The frame lattice}\label{S:Frame C}
As in Section~\ref{S:Frame}, 
we first construct the lattice $\Frame C$, 
see Figure~\ref{F:Frame}.
Also recall that $\Frame C$ is the union of 
the chains $C_p = \set{o, a_p, b_p, i}$, for $p \in P$, 
the chain $C$, 
and the chain $C_u = \set{o, u, i}$ 
with an additional nonchain sublattice, 
$S = \set{o, a_1, b_1, s_1, i}$. 
In formula, 
\begin{equation}
   \Frame C = C \uu S \uu C_u \uu \UUm{C_p}{p \in P},
\end{equation}
where any two distinct components intersect in $\set{o,i}$.

\subsection{The lattice $L$}\label{S:L}
We are going to construct the lattice $L$ 
(of Theorem~\ref{T:main2})
as an extension of $\Frame C$. 

We utilize the following lattices, which we shall call \emph{component lattices}:
\begin{align}
   &W(p,q) &&\text{ for }p < q \in P;\label{E:W}\\
   &\Flag(c_i) &&\text{ for } i < n;\label{E:Flag}\\
   &S,\label{E:S}\\
   &C_u.\label{E:u}
\end{align}
Let $\CompLat C$ be the set of component lattices
associated with the colored chain~$C$.

Recall from Section~\ref{S:flag} that $\Flag(c_i)\ii\Flag(c_j) = C \uu \set{o,i}$ for $i \neq j$.

We start with a simple, but crucial, observation.

\begin{lemma}\label{L:meetirr}%Lemma~\ref{L:meetirr}
Let $A \neq B \in \CompLat C$.
If either $A$ or $B$ is not a flag lattice,
then $A \ii B$ is a chain $X$: $\set{o,i}$,
$C_p$, or $C \uu \set{o,i}$. 
Moreover, the elements of $X - \set{o}$ are meet-irreducile.
\end{lemma}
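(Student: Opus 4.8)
The plan is to prove Lemma~\ref{L:meetirr} by a direct case analysis on the types of the component lattices $A$ and $B$, using the structural fact that all component lattices share the common skeleton $\Frame C$ and pairwise intersect in a sublattice of $\Frame C$. First I would record the intersection patterns: by construction every component lattice in $\CompLat C$ is an extension of a specific $\set{o,i}$-sublattice of $\Frame C$, and the ``new'' elements added to form each component (the seven elements of $W(p,q)$, the eight elements of $\Flag(c_i)$, the sectional complement $s_1$ in $S$, the universal complement $u$ in $C_u$) are disjoint from those added to form any other component. Hence the intersection $A \ii B$ consists precisely of the $\Frame C$-elements that both components retain. I would enumerate these: two distinct chains $C_p$ meet only in $\set{o,i}$; $S$ restricted to any other component meets it in $\set{o,i}$ or in $C_1$; $C_u$ meets anything else in $\set{o,i}$; and a flag lattice $\Flag(c_i)$ meets any non-flag component in one of $\set{o,i}$, $C_p$, or $C \uu \set{o,i}$, the last case occurring because each flag retains the whole chain $C$.

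The hypothesis that at least one of $A,B$ is \emph{not} a flag lattice is exactly what forces the intersection to be one of the three listed chains rather than the larger set $C \uu \set{o,i}$ plus extra shared flag-elements; so the first half of the statement reduces to checking, case by case, that under this hypothesis $A \ii B$ is linearly ordered and equals $\set{o,i}$, some $C_p$, or $C \uu \set{o,i}$. The genuinely new content is the ``moreover'' clause: I must show every element of $X - \set{o}$ is meet-irreducible \emph{in $L$}, not merely in $\Frame C$. The natural approach is to fix $x \in X - \set{o}$ and show $x$ has a unique upper cover in $L$. Since $L$ is the union of the component lattices and any two meet in a sublattice of $\Frame C$, any cover of $x$ in $L$ lies in some component lattice containing $x$; so it suffices to examine, for each component $A \in \CompLat C$ with $x \in A$, the covers of $x$ within $A$ and confirm they all coincide.

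The main obstacle I anticipate is this meet-irreducibility verification, because $x$ typically lies in several component lattices simultaneously (e.g.\ an element $a_p$ lies in $C_p$, in every $W(p,q)$ and $W(q,p)$, and in every flag with color $p$), and I must rule out the appearance of a \emph{second} upper cover contributed by any one of them. The plan is to treat the three candidate chains $X$ separately. For $X = \set{o,i}$ the only relevant element is $i$, which is the top of $L$ and hence vacuously meet-irreducible. For $X = C_p$, the elements $a_p, b_p, i$ must each be shown to have a unique cover; here I would appeal to the explicit diagrams of $W(p,q)$ and $\Flag(c_i)$ (Figures~\ref{F:W}, \ref{F:Flag}, \ref{F:AddingFlag}) to check that the added elements attach to $a_p$ and $b_p$ from below or sideways but never create a new element covering them. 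For $X = C \uu \set{o,i}$, the interior elements $c_j$ of the chain $C$ each have $c_{j+1}$ as their cover in $C$, and I would verify from the flag construction that the eight added elements of each $\Flag(c_i)$ sit below the relevant $c_j$ or along the chain $C_p$, again contributing no competing cover. Throughout, the key leverage is Lemma~\ref{L:meetirr}'s own hypothesis together with the fact that distinct components meet only inside $\Frame C$, which localizes the search for covers and keeps the case analysis finite and mechanical.
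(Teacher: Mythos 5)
The paper itself gives no proof of Lemma~\ref{L:meetirr} (it is offered as ``a simple, but crucial, observation''), so your strategy --- direct inspection of how the component lattices intersect, followed by a cover-by-cover check of meet-irreducibility --- is necessarily the intended route. However, your case enumeration contains a concrete structural error. You claim that a flag lattice can meet a \emph{non-flag} component in $C \uu \set{o,i}$, ``because each flag retains the whole chain $C$.'' That reasoning requires \emph{both} components to retain $C$, and the flag lattices are the only components containing any element of $C$: $W(p,q)$ is $C_p \uu C_q$ plus its seven new elements, while $S = \set{o, a_1, b_1, s_1, i}$ and $C_u = \set{o,u,i}$. Hence under the lemma's hypothesis (at least one of $A$, $B$ is not a flag) the intersection is always $\set{o,i}$ or some $C_p$; the value $C \uu \set{o,i}$ arises exactly for flag--flag pairs, which is precisely the case the hypothesis excludes. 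The paper's own use of the lemma confirms this reading: in the proof that $\leq$ is an order relation, the author concludes from Lemma~\ref{L:meetirr} that $y = a_p$ or $y = b_p$, never allowing $y \in C$.

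This error matters because it propagates into the hardest part of your plan. You devote the case $X = C \uu \set{o,i}$ to proving that every chain element $c_j$ is meet-irreducible in $L$, asserting that the eight elements added in $\Flag(c_i)$ ``sit below the relevant $c_j$\dots contributing no competing cover.'' Nothing in the paper supports this, and it is exactly the claim the lemma's hypothesis is designed to avoid having to make: $\Flag(c_i)$ forces $\con{c_i, c_{i+1}} = \con{a_p, b_p}$ through a path of transposed prime intervals and carries the auxiliary primed chain $c_1', c_2', \dots$, and such connecting elements are the natural source of extra covers on elements of $C$. That the author restricts the lemma to pairs with a non-flag member strongly suggests he did not want to (or could not) assert meet-irreducibility of the $c_j$. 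Had you noted that the $C \uu \set{o,i}$ case is vacuous under the hypothesis, your burden would shrink to what is actually needed and checkable from the diagrams: $i$ is the unit, and $a_p$, $b_p$ have unique upper covers ($b_p$ and $i$, respectively) in every component containing them --- which your $X = C_p$ case handles correctly. So the proof is repairable by deleting the spurious case, but as written it rests on an unsupported and very possibly false claim about the flag lattices.
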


We define the set
\begin{align}\label{E:L}
   L = \UUm{W(p,q)}{p < q \in P} 
   \uu \UUm{\Flag(c_i)}{i< n} \uu S \uu C_u.
\end{align}

Define the order relation $\leq$ on $L$ as follows:

$x \leq y$ in $L$ if{}f $x \leq y$ holds 
in one of the component lattices.

In formula,
\begin{equation}\label{E:Lequ}%\eqref{E:Lequ}
   \leq = \UUm{\leq_{W(p,q)}}{p < q \in P}
      \uu \UUm{\leq_{\Flag(c_i)}}{i< n} 
       \uu \leq_{S} \uu\leq_{C_{u}}\!\!.
\end{equation}

\begin{lemma}
The binary relation $\leq$ on $L$ is an order relation.
\end{lemma}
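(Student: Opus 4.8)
The plan is to verify the three defining properties of an order relation—reflexivity, antisymmetry, and transitivity—for the binary relation $\leq$ defined by \eqref{E:Lequ} as the union of the order relations of the component lattices. Reflexivity is immediate: every element of $L$ lies in some component lattice by \eqref{E:L}, and $x \leq x$ holds there, so $x \leq x$ in $L$. The real content is antisymmetry and transitivity, where the difficulty is that $x \leq y$ and $y \leq z$ may be witnessed by \emph{different} component lattices, so I cannot simply appeal to a single lattice's order.

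My first step would be to record the structural facts that make the union well-behaved. By Lemma~\ref{L:meetirr}, any two distinct component lattices $A, B$ (with at least one not a flag lattice) intersect in one of the chains $\set{o,i}$, $C_p$, or $C \uu \set{o,i}$; together with the stated fact that $\Flag(c_i) \ii \Flag(c_j) = C \uu \set{o,i}$ for $i \neq j$, this gives a complete description of all pairwise intersections. The key consequence I would extract is that each such intersection is a chain in which the elements above $o$ are meet-irreducible, and that the intersection is a $\set{o,i}$-sublattice common to both components. First I would check that whenever $x \leq y$ is witnessed in $A$ and $y \leq z$ is witnessed in $B$, the shared element $y$ lies in $A \ii B$; the goal is then to show that $x$ and $z$ can be brought into a common component so that transitivity in that single component applies.

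The main obstacle will be handling the ``bridging'' case in transitivity: showing that if $x \leq y$ in one component and $y \leq z$ in another, then $x \leq z$ is witnessed in \emph{some} component. Here I would argue by the location of $y$. If $y \in \set{o,i}$ the claim is trivial ($y = o$ forces $x = o$, and $y = i$ forces $z = i$). Otherwise $y$ lies in a shared chain $X$ (one of $C_p$ or $C \uu \set{o,i}$) whose non-bottom elements are meet-irreducible; I would use the fact that every component lattice containing $y \neq o$ contains all of the relevant chain $X$ through $y$ up to $i$, together with the constraint that the elements below and above $y$ in the two witnessing components must themselves reconcile through $X$. Concretely, because $y$ is meet-irreducible (being in $X - \set{o}$) and the components agree on $X$, I expect to show that either $x \in X$ or $z \in X$, reducing the chain $x \leq y \leq z$ to a relation that lives entirely within a single component lattice, where transitivity holds by hypothesis.

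Once transitivity is established, antisymmetry follows by a similar case analysis: if $x \leq y$ and $y \leq x$ with the two inequalities witnessed in components $A$ and $B$, then both $x$ and $y$ lie in $A \ii B$, which is a chain by Lemma~\ref{L:meetirr}; in a chain $x \leq y$ and $y \leq x$ force $x = y$. I anticipate that the bulk of the writing will be the bookkeeping needed to enumerate which component lattices contain a given shared element and to confirm that the chains $C_p$ and $C \uu \set{o,i}$ sit inside every component that meets them in the way Lemma~\ref{L:meetirr} asserts; the conceptual crux, however, is the single transitivity-bridging argument driven by meet-irreducibility of the shared chain elements.
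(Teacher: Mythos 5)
Your proposal takes essentially the same route as the paper's proof: reflexivity is immediate from the union definition, antisymmetry reduces to the fact that intersections of distinct components are chains, and transitivity is settled by locating the bridge element $y$ in the chain $A \ii B$ supplied by Lemma~\ref{L:meetirr} and using meet-irreducibility of the elements of that chain to force the triple $x \leq y \leq z$ into a single component lattice. The paper makes concrete the step you leave as an expectation --- when $y = a_p$, its unique upper cover $b_p$ in $B$ forces $z \in \set{b_p, i} \ci A$, and when $y = b_p$ it forces $z = i$ --- while, much like your sketch's appeal to meet-irreducibility for the intersection $C \uu \set{o,i}$ of two flag lattices (a case Lemma~\ref{L:meetirr}'s hypothesis formally excludes), the paper's own write-up passes over that configuration without separate justification.
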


\begin{proof}
By \eqref{E:L} and \eqref{E:Lequ}, the relation $\leq$ is reflexive; by definition, it is antisymmetric.

Let $x \leq y \leq z$ in $L$. If $x = y$ or $y = z$,
then $x \leq z$ trivially holds, so we can assume that $x < y < z$.
By the definition of $\leq$ in $L$, see \eqref{E:Lequ}, 
there are component lattices $A$ and $B$ 
so that $x < y$ in $A$ and $y < z$ in $B$.

If $A = B$, then $x < z$ in $A$, therefore, $x < z$ in $L$.
So we can assume that $A$ and~$B$ are distinct lattices
and Lemma~\ref{L:meetirr} applies.
It follows that $y = a_p$ or $y = b_p$ for some $p \in P$. 

If $y = a_p$, then $y < z$ in $B$ 
implies that $b_p = y^* \leq z$ in $B$. 
So $z$ is $b_p$ or $i$, and $x < z$ in $A$ and, therefore,
$x < z$ in $L$ follows.

If $y = b_p$, then again $b_p  < z$ in $B$ and 
$x < z$ in $A$ and, therefore, $x < z$ in $L$ follows.
\end{proof}

\begin{corollary}
The ordered set $L$ is a lattice and 
each component lattice is a sublattice.
\end{corollary}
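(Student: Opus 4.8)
The plan is to invoke the standard criterion that a finite ordered set with a least element $o$ and a greatest element $i$ is a lattice as soon as any two elements have a meet: the join of $x$ and $y$ is then recovered as the meet of the nonempty finite set of their common upper bounds. Since $o$ and $i$ lie in every component lattice, $L$ has these bounds, so it remains only to produce $x \mm y$ for arbitrary $x, y \in L$.

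First I would record a \emph{restriction principle}: for any $x \in L$ and any component lattice $A \in \CompLat C$ with $x \in A$, the elements of $A$ below $x$ in $L$ are exactly the elements of $A$ below $x$ in $A$. This is immediate from \eqref{E:Lequ} together with antisymmetry, since whenever two components both contain a comparable pair their orders must already agree on the overlap (otherwise $\leq$ could not be antisymmetric). In particular, every lower bound of an element private to a single component $A$ lies inside $A$.

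The main work is the computation of $x \mm y$, split according to whether $x$ and $y$ share a component. If some $A \in \CompLat C$ contains both, I would claim $x \mm y = x \mm_A y$: any common lower bound $z$ lying in $A$ is dealt with by the restriction principle, while a common lower bound $z \notin A$ forces $z \leq x$ and $z \leq y$ to hold through components $B, B'$ distinct from $A$, so that $x \in A \ii B$ and $y \in A \ii B'$. By Lemma~\ref{L:meetirr} these intersections are chains ($\set{o,i}$, $C_p$, or $C \uu \set{o,i}$) whose nonzero elements are meet-irreducible, and I would use the resulting unique upper covers to push $z$ below $x \mm_A y$, exactly as in the transitivity proof where $y = a_p$ forced $b_p = y^* \leq z$. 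If instead no component contains both $x$ and $y$, then by the restriction principle every common lower bound lies in $\UUm{A \ii B}{A \ni x,\ B \ni y}$, and each $A \ii B$ is again one of the chains of Lemma~\ref{L:meetirr}; I would show that the common lower bounds form an up-directed set, so that this finite nonempty set has a greatest element, the desired meet.

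I expect the real obstacle to be precisely the control of these \emph{external} lower bounds, those common lower bounds of $x$ and $y$ that live outside any single component containing both. The only tool is Lemma~\ref{L:meetirr}: each intersection of two components is a chain with meet-irreducible (hence uniquely covered) nonzero elements, and making rigorous that these unique covers force every external lower bound beneath $x \mm_A y$, or beneath the top of the relevant intersection chain in the non-shared case, is the heart of the argument and will require a careful enumeration of how the chains $\set{o,i}$, $C_p$, and $C \uu \set{o,i}$ can overlap. Once this is in hand, the sublattice claim is immediate: the shared-component case shows $x \mm_A y = x \mm y$, and the dual argument gives $x \jj_A y = x \jj y$, so each component lattice is closed under both operations of $L$ and is therefore a sublattice.
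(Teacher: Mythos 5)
Your overall strategy---$L$ is a finite bounded ordered set, so it suffices to produce all meets and recover joins from common upper bounds---is legitimate, and your case split (shared component versus not) is the right decomposition. Note first what the paper does: it offers no proof at all here, treating the corollary as immediate from the construction, and the operational content you are trying to supply is exactly what the two \emph{subsequent} corollaries record, namely Corollary~\ref{C:compl} (elements of non-adjacent components are complementary, so their meet is $o$ and their join is $i$) and Corollary~\ref{C:adjacent} (for adjacent components, $a \jj b = \max(a^U, b^U)$ along the shared chain $U$), with the same-component case handled by the fact that overlaps are chains on which all components induce the same order. Your ``restriction principle'' is essentially this last fact; it rests on Lemma~\ref{L:meetirr} and the construction, not on antisymmetry alone, since antisymmetry by itself would not exclude a pair comparable in $B$ but incomparable in $A$.

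The genuine gap is in your treatment of external bounds, and it is twofold. First, the tool you invoke is dualized incorrectly: Lemma~\ref{L:meetirr} gives \emph{meet}-irreducibility of the elements of $X - \set{o}$, that is, unique \emph{upper} covers, and the transitivity-proof trick ($y = a_p$ and $y < z$ in $B$ force $b_p = y^* \leq z$) constrains elements \emph{above} a chain element. It says nothing about a common \emph{lower} bound $z < x$, which is what your meet computation needs; to push $z$ below $x \mm_A y$ you would need dual information (what lies \emph{below} $a_p$, $b_p$, $c_j$ inside each component), which the lemma does not provide and which is false as a formal dual---for instance $b_p$ acquires a second lower cover $h_{p,q}$ inside $W(p,q)$, so these elements need not be join-irreducible. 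Second, the step you yourself flag as the heart of the matter---the enumeration showing that external lower bounds are forced down to $o$ or into the shared chain, and that in the non-shared case the common lower bounds are up-directed---is never carried out; it genuinely requires inspecting the diagrams of $W(p,q)$ and $\Flag(c_i)$, because that is where all cross-component comparabilities live. So as written this is a proof plan whose decisive verification is deferred, with the one concrete mechanism proposed for it pointing the wrong way. The natural repair is to run your argument on \emph{joins} instead of meets (there meet-irreducibility is exactly the right tool, and what you would be proving is the formula of Corollary~\ref{C:adjacent}), then obtain meets for free from the finiteness and boundedness of $L$.
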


In fact, the union of any number 
of component lattices is a sublattice.

Let $A \neq B \in \CompLat C$. 
We call them \emph{adjacent}, if $A \ii B \neq \set{o,i}$.

\begin{corollary}\label{C:compl}%Corollary~\ref{C:compl}
Let $A$ and $B$ be not adjacent component lattices.
Then $a \in A - \set{o,i}$ and $b \in B - \set{o,i}$
are complementary.
\end{corollary}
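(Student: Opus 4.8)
The plan is to read off the corollary from the remark that immediately precedes it: the union of any family of component lattices is a sublattice of $L$. In particular $A \uu B$ is a sublattice, so the meet $a \mm b$ and the join $a \jj b$ formed in $L$ agree with those formed in $A \uu B$, and it suffices to work inside this sublattice. The first step is to describe its order explicitly. By \eqref{E:Lequ}, the order of $A \uu B$ is the restriction to $A \uu B$ of the union of all component orders; I would show that this restriction is exactly $\leq_A \uu \leq_B$, i.e. that $A \uu B$ is the gluing of $A$ and $B$ with their bottoms identified to $o$ and their tops identified to $i$. Concretely, for $x,y \in A \uu B$ with $x \leq y$ in $L$, the comparability is witnessed in some single component $M$; I must rule out that it is witnessed \emph{only} by a third component $M \notin \set{A,B}$.

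The crux — and the step I expect to be the main obstacle — is this no-cross-comparability claim, in particular the case $x \in A - \set{o,i}$, $y \in B - \set{o,i}$ (or symmetrically) realized in some $M$. If $M = A$ then $y \in A \ii B = \set{o,i}$, and if $M = B$ then $x \in A \ii B = \set{o,i}$; both contradict $x,y \notin \set{o,i}$. If $M \notin \set{A,B}$, then $x \in A \ii M$ and $y \in B \ii M$, so Lemma~\ref{L:meetirr} forces $x,y$ into the shared skeleton: each is some $a_p$, $b_p$, or a member of $C$. A common parameter $p$ would put $C_p \ci A \ii B$, while $x,y \in C$ would force both $A$ and $B$ to be flag lattices; but non-adjacent $A,B$ are never both flags, since $\Flag(c_i) \ii \Flag(c_j) = C \uu \set{o,i} \neq \set{o,i}$. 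Each alternative contradicts $A \ii B = \set{o,i}$, so no such cross-comparability exists, and hence the order of $A \uu B$ is $\leq_A \uu \leq_B$ as claimed. (Comparabilities with $x,y$ on the same side are inherited in $A$ or in $B$ by the same Lemma, since the relevant intersection sits inside that component.)

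With the order of $A \uu B$ identified, the conclusion is a short case analysis. For the join, let $w \in A \uu B$ be any common upper bound of $a$ and $b$; then $w \in A$ or $w \in B$. If $w \in A$, then $b \leq w$ is realized in $A$ or in $B$: the former gives $b \in A \ii B = \set{o,i}$, impossible, so $w \in B$, whence $w \in A \ii B = \set{o,i}$, and $w \geq a > o$ forces $w = i$; the case $w \in B$ is symmetric. Thus $a \jj b = i$. Dually, any common lower bound $z$ lies in $A$ or $B$; tracing $z \leq a$ and $z \leq b$ through the two sides forces $z \in A \ii B = \set{o,i}$, and $z \leq a < i$ gives $z = o$, so $a \mm b = o$. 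Therefore $a$ and $b$ are complementary, as required.
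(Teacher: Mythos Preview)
Your approach is the intended one: the paper states this corollary without proof, expecting it to follow from the remark that any union of component lattices is a sublattice together with Lemma~\ref{L:meetirr}. Reducing to the claim that the order on $A \uu B$ is exactly $\leq_A \uu \leq_B$, and then reading off complementarity by a short case split, is the right shape.

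There is one gap in your cross-comparability argument. After Lemma~\ref{L:meetirr} places $x \in A \ii M$ and $y \in B \ii M$ into the skeleton, you treat only two alternatives: a common parameter~$p$, or $x,y \in C$. You omit the cases $x \in \set{a_p,b_p}$, $y \in \set{a_q,b_q}$ with $p \neq q$, and $x \in \set{a_p,b_p}$, $y \in C$ (and their symmetric versions). These do not reduce to your two alternatives, and they are not excluded by anything you have written. What actually rules them out is a separate structural fact you never invoke: in every component lattice~$M$, the distinct skeleton chains $C_p - \set{o,i}$, $C_q - \set{o,i}$ (for $p \neq q$), and $C - \set{o,i}$ are pairwise incomparable inside~$M$ (inspect Figures~\ref{F:W} and~\ref{F:Flag}). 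Once you note this, $x \leq_M y$ with $x,y$ in the skeleton forces $x$ and $y$ into the \emph{same} skeleton chain of~$M$, and that whole chain then sits in $A \ii B$, yielding your contradiction with non-adjacency. Add this one line and the argument is complete.
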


Let $U$ be a $\set{o,i}$-chain in $L$. 
Then for every $x \in L$,
there is a smallest element $x^U \geq x$ of $U$ and
and a largest element $x_U \leq x$ of $U$.

If $A$ and $B$ are adjacent component lattices, 
then $U(A, B) = A \ii B$ is a $\set{o,i}$-chain of $L$.

\begin{corollary}\label{C:adjacent}%Corollary~\ref{C:adjacent}
Let $A$ and $B$ be adjacent component lattices.
Let $a \in A - \set{o,i}$ and $b \in B - \set{o,i}$.
Then $a \jj b = \max (a^U, b^U)$, where $U = U(A,B)$.
\end{corollary}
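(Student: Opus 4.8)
The plan is to exploit the fact, recorded immediately after the previous corollary, that the union of any number of component lattices is a sublattice of $L$; in particular $A \uu B$ is a sublattice, so the join $a \jj b$ may be computed inside $A \uu B$. Recall that $U = U(A,B) = A \ii B$ is a $\set{o,i}$-chain, so it contains both $o$ and $i$ and hence reaches from bottom to top inside each of $A$ and $B$. Put $c = \max(a^U, b^U)$, an element of $U$. The easy inequality $a \jj b \leq c$ is immediate from the definitions of $a^U$ and $b^U$: we have $a \leq a^U \leq c$ and $b \leq b^U \leq c$, so $c$ is a common upper bound of $a$ and $b$.

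The substance lies in the reverse inequality $a \jj b \geq c$, which I would obtain by showing that every common upper bound $z$ of $a$ and $b$ in $A \uu B$ satisfies $z \geq a^U$ and $z \geq b^U$, whence $z \geq \max(a^U,b^U) = c$. Fix such a $z$; by the symmetry exchanging the roles of $(A,a)$ and $(B,b)$ it is enough to prove $z \geq b^U$. Since the order of $L$ is literally the union of the component orders, and that union was shown to be transitive---hence equal to its own transitive closure---the relation $b \leq z$ is witnessed inside a single component $D$ with $b, z \in D$. If $b \in U$, then $b^U = b \leq z$ and there is nothing to prove, so I may assume $b \in B - U$, which forces $D \neq A$. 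If $D = B$, then $z \in A \ii B = U$, and an element of the chain $U$ lying above $b$ lies above the least such element $b^U$.

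The one remaining possibility---that $b \leq z$ is witnessed by a \emph{third} component $D \notin \set{A,B}$---is the main obstacle, since a priori such a $D$ could supply a ``diagonal'' upper bound of $a$ and $b$ sitting strictly below $c$. To exclude this I would apply Lemma~\ref{L:meetirr}: each of $A \ii D$ and $B \ii D$ is one of the chains $\set{o,i}$, $C_p$, or $C \uu \set{o,i}$, whose elements other than $o$ are meet-irreducible. Thus $z$ lies on the chain $A \ii D$ and $b$ on the chain $B \ii D$, and tracing the inequality $b \leq z$ through $D$ against the explicit descriptions of the component lattices $W(p,q)$, $\Flag(c_i)$, $S$, and $C_u$ shows that any element of $A \ii D$ dominating such a $b$ must already dominate $b^U$; that is, no genuine shortcut below $b^U$ is created.

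Granting this, $z \geq b^U$ holds in every case, and symmetrically $z \geq a^U$; therefore $z \geq \max(a^U, b^U) = c$, and combined with the easy inequality we conclude $a \jj b = \max(a^U, b^U)$, as required. I expect the third-component analysis to be the only delicate point: everything else is bookkeeping with the definitions of $a^U$ and $b^U$, while ruling out diagonal upper bounds genuinely relies on the meet-irreducibility supplied by Lemma~\ref{L:meetirr} together with the concrete shapes of the component lattices.
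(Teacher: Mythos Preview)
The paper states this corollary without proof, so there is no paper argument to compare against directly. Your overall strategy---compute the join inside the sublattice $A \uu B$, show $c = \max(a^U,b^U)$ is an upper bound, then show every common upper bound dominates $c$---is the right one, but there is a concrete gap in the case ``$D = B$''. From $D = B$ you conclude $z \in A \ii B = U$, yet $D = B$ only tells you $z \in B$; nothing you have used so far forces $z \in A$. The missing ingredient is the \emph{other} inequality $a \leq z$: if $z \in B - U$, then since $a \in A$ and $z \notin A$, the relation $a \leq z$ must be witnessed in some component $D'$ with $D' \neq A$, and you are thrown back into a third-component analysis---now for $a$ rather than $b$. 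In other words, the symmetry you invoke up front does not let you treat the two inequalities $a \leq z$ and $b \leq z$ independently; chasing one may require chasing the other.

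A cleaner organization is to first say, without loss of generality, that $z \in A$ (possible since $z \in A \uu B$ and the target inequality $z \geq \max(a^U,b^U)$ is symmetric in $(A,a)$ and $(B,b)$). Then $b \leq z$, with $b \in B - U$ and $z \in A$, genuinely forces the witnessing component $D$ to place $z$ in $A \ii D$, and your case split (either $D = B$, giving $z \in A \ii B = U$ and hence $z \geq b^U$, or $D$ a third component) goes through as written. The cost is that after breaking symmetry you still owe a separate check that $z \geq a^U$ inside $A$; this, together with the third-component case, is where the meet-irreducibility from Lemma~\ref{L:meetirr} and the explicit shapes of the component lattices are actually used, and your final paragraph is right to flag this as the crux rather than bookkeeping.
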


\section{Proving Theorem~\ref{T:main2}}\label{S:main2}%Section~\ref{S:main2}

\subsection{Two lemmas}\label{S:lemmas}
In this section, under the same assumptions 
as in Section~\ref{S:Construction}, 
we describe the congruences of the lattice $L$ 
we constructed in the previous section.
We verify that the finite distributive lattice 
$D$ is isomorphic to the congruence lattice of $L$
and under this isomorphism, the elements of $Q$
correspond to the principal congruences.

We start the proof with two easy lemmas.

\begin{lemma}\label{L:big}
For every $x \in L$, there is an 
$\set{o,i}$-sublattice $A$ of $L$ containing~$x$ and 
isomorphic to $\SM 3$.
\end{lemma}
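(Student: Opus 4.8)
The plan is to exhibit, for an arbitrary $x \in L$, an explicit $\set{o,i}$-sublattice isomorphic to $\SM 3$ (the five-element non-modular lattice $N_5$ or the diamond $M_3$—here presumably $\SM 3 = M_3$, the three-atom modular lattice) by combining $x$ with a suitable pair of complementary elements drawn from a \emph{non-adjacent} component lattice. The key tool will be Corollary~\ref{C:compl}: if I can find a component lattice $B$ not adjacent to any component lattice containing $x$, and two distinct elements $b_1, b_2 \in B - \set{o,i}$, then $x, b_1, b_2$ are pairwise complementary atoms-over-$o$/coatoms-under-$i$ in the sublattice they generate, and $\set{o, x, b_1, b_2, i}$ is a copy of $M_3$.

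First I would fix $x$ and identify a component lattice $A$ with $x \in A - \set{o,i}$ (if $x \in \set{o,i}$ the claim is trivial, since every component lattice already contains $o$ and $i$ and any one with at least one interior element suffices—and by the hypothesis $|D|>2$ there is such a lattice). Next I would locate a component lattice $B$ that is \emph{not adjacent} to $A$ and has at least two interior elements; the lattices $W(p,q)$, $\Flag(c_i)$, and $S$ all have several interior elements, so the existence of two distinct $b_1, b_2 \in B - \set{o,i}$ is immediate once such a $B$ is found. By Corollary~\ref{C:compl}, each $b_k$ is complementary to $x$, so $x \jj b_k = i$ and $x \mm b_k = o$ for $k = 1,2$; and since $b_1, b_2$ lie in the same component lattice $B$ with $b_1 \neq b_2$, I would choose them incomparable (or replace them by a complementary pair inside $B$) so that $b_1 \jj b_2 = i$ and $b_1 \mm b_2 = o$ as well. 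This yields the three pairwise-complementary elements needed for $M_3$, and the subset $\set{o, x, b_1, b_2, i}$ is then closed under the operations, hence an $\set{o,i}$-sublattice isomorphic to $\SM 3$.

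The main obstacle will be the bookkeeping needed to guarantee that a non-adjacent $B$ with the required two interior elements always exists, no matter which component $A$ contains $x$. Adjacency is a genuine constraint: flag lattices overlap in $C \uu \set{o,i}$, the lattices $W(p,q)$ share the chains $C_p$, and $S$ meets $C_1$. So I would argue that the set $\CompLat C$ is rich enough—there are $W(p,q)$ for all pairs $p<q$, a flag for each $c_i$ with $i<n$, plus $S$ and $C_u$—that for any single $A$ one can always find a component $B$ meeting $A$ only in $\set{o,i}$. The cleanest route is a short case analysis on the type of $A$ (whether $A$ is some $W(p,q)$, some $\Flag(c_i)$, $S$, or $C_u$), in each case naming a concrete non-adjacent $B$ with two interior elements. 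Alternatively, and more robustly, I would use $C_u = \set{o,u,i}$ together with a second component to manufacture the pair; but since $C_u$ has only one interior element, I expect to need at least one two-interior-element $B$, so the case analysis is the safer plan.
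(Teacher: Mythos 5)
Your basic idea---build the diamond as $\set{o, x, b_1, b_2, i}$ out of pairwise complementary elements, with complementarity supplied by Corollary~\ref{C:compl}---runs into two concrete problems. First, Corollary~\ref{C:compl} only makes $b_1$ and $b_2$ complementary to $x$; it says nothing about $b_1$ versus $b_2$, and your proposed fixes (``choose them incomparable'' or ``replace them by a complementary pair inside $B$'') do not work in general: in $S = \set{o, a_1, s_1, b_1, i}$ the interior elements $a_1$ and $s_1$ are incomparable, yet $a_1 \jj s_1 = b_1 \neq i$, so $S$---which you explicitly list as an acceptable choice of $B$---contains no complementary pair at all, and whether $W(p,q)$ or $\Flag(c_i)$ contains such a pair cannot be deduced from the lemmas you cite; it would require inspecting their diagrams. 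Second, and more seriously, the non-adjacent component $B$ with two interior elements need not exist. Take $P = \set{p,1}$ (for instance $D$ a three-element chain, which satisfies the standing hypotheses $|D|>2$ and join-irreducible unit): every flag lattice is colored $p$ or $1$, hence shares $C_p$ or $C_1$ with $W(p,1)$, and $S$ shares $C_1$ with $W(p,1)$; so the only component not adjacent to $W(p,1)$ is $C_u$, which has a single interior element. For $x$ one of the seven connecting elements of $W(p,1)$, your case analysis therefore has no candidate $B$ whatsoever, and the argument collapses exactly where it is needed.

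The paper's proof turns on the component you set aside. Since $C_u = \set{o,u,i}$ meets every other component in $\set{o,i}$, the element $u$ is complementary to \emph{every} element of $L - \set{o,i}$; thus $u$ supplies one leg of the diamond for free, and all that remains is to name a \emph{single} second element complementary to $x$: the paper takes $A = \set{u, a_1, x, o, i}$ in the generic case, and $A = \set{u, a_1, c_1, o, i}$, resp.\ $A = \set{u, s, c_1, o, i}$, in the degenerate cases $x \in \set{o,i,u,a_1}$, resp.\ $x = s$. Needing only one auxiliary element complementary to $x$, rather than a mutually complementary pair located inside a single component far from $x$, is what makes the lemma provable in three lines; your scheme demands strictly more than the construction supplies.
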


\begin{proof}
For $x \in \set{o,i,u, a_1}$, take $A = \set{u, a_1, c_1, o, i}$.
If $x  = s$, then $A = \set{u, s, c_1, o, i}$ is such a sublattice.
Otherwise, let $A = \set{u, a_1, x, o, i}$.
\end{proof}

An~\emph{internal congruence} of $L$
is a congruence $\bga > \zero$, 
such that $\set{o}$ and $\set{i}$ are 
congruence blocks of~\bga. 

\begin{lemma}\label{L:nonisolating}
Let us assume that \bga is not an internal congruence of $L$.
Then $\bga = \one$.
\end{lemma}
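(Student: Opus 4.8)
The plan is to reduce the statement to the simplicity of the small lattice $\SM 3$ furnished by Lemma~\ref{L:big}. Since an internal congruence is by definition $>\zero$, the lemma is to be read under the standing assumption $\bga > \zero$; then ``$\bga$ is not internal'' means precisely that $\set{o}$ fails to be a block of $\bga$ or $\set{i}$ fails to be a block of $\bga$. In either case an endpoint is collapsed with a distinct element: there is some $x \neq o$ with $o \equiv x \pmod{\bga}$, or some $y \neq i$ with $i \equiv y \pmod{\bga}$.

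First I would treat the case $o \equiv x$ with $x \neq o$; the case at $i$ is entirely analogous. Since $x \in L$, Lemma~\ref{L:big} provides an $\set{o,i}$-sublattice $A$ of $L$ with $x \in A$ and $A \iso \SM 3$. By construction $A$ contains both $o$ and $i$, so the two distinct elements $o$ and $x$ both lie in $A$. As $A$ is a sublattice, the restriction of $\bga$ to $A$ is a congruence of $A$, and it identifies the distinct elements $o$ and $x$, so this restriction is strictly above $\zero$.

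The key step is that $\SM 3 \iso \msf{M}_3$ is a simple lattice: its only congruences are $\zero$ and $\one$. (Concretely, $A$ is realized in $L$ by three pairwise complementary middle elements together with $o$ and $i$; any nontrivial congruence collapses one complemented pair and hence all of $A$.) Therefore the restriction of $\bga$ to $A$ is the full congruence of $A$, so in particular $o \equiv i \pmod{\bga}$, that is, $\con{o,i} \le \bga$. Since $o$ and $i$ are the bounds of $L$, we have $\con{o,i} = \one$, whence $\bga = \one$, as claimed.

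I expect the only delicate point to be confirming that the element witnessing the failure of internality genuinely sits inside a copy of $\SM 3$ that also contains the relevant endpoint; this is immediate here, because every sublattice supplied by Lemma~\ref{L:big} is an $\set{o,i}$-sublattice and so automatically contains both $o$ and $i$. Everything else is the standard simplicity of $\msf{M}_3$ together with the observation that restricting a congruence to a sublattice preserves the collapse of any pair lying in that sublattice.
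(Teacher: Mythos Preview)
Your proof is correct and follows essentially the same route as the paper: find an element $x\in L-\set{o,i}$ collapsed with $o$ or $i$, invoke Lemma~\ref{L:big} to place $x$ inside a $\set{o,i}$-sublattice $A\iso\SM 3$, and use the simplicity of $A$ to conclude $\bga=\one$. Your explicit remark that the statement is to be read under the standing hypothesis $\bga>\zero$ is a worthwhile clarification, since the definition of ``internal'' already excludes $\zero$.
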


\begin{proof}
Indeed, if \bga is not an internal congruence of $L$, 
then there is an $x \in L - \set{o, i}$ 
such that $\cng x = o (\bga)$
or $\cng x = i (\bga)$. Using the sublattice $A$
provided by Lemma~\ref{L:big}, 
we conclude that $\bga = \one$,
since $A$ is a simple $\set{o,i}$-sublattice.
\end{proof}

\subsection{The congruences of a W lattice}\label{S:Wcongs}
We start with the congruences of the lattice $W(p, q)$ 
with $p < q \in P$, see Figure~\ref{F:Wcongs}.

\begin{lemma}\label{L:Scongs}
The lattice $W(p, q)$ has two internal congruences:
\[
   \con{a_p, b_p} < \con{a_q, b_q}.
\]
\end{lemma}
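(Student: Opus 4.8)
The plan is to compute the congruence lattice of the single component lattice $W(p,q)$ directly, treating it as a small finite lattice whose diagram is given in Figure~\ref{F:W}. The statement asserts two facts: first, that $W(p,q)$ has exactly two \emph{internal} congruences (in the sense defined just above Lemma~\ref{L:nonisolating}, i.e.\ nontrivial congruences collapsing neither $\set{o}$ nor $\set{i}$ into a larger block), and second, that these two congruences are $\con{a_p,b_p}$ and $\con{a_q,b_q}$ with the strict comparability $\con{a_p,b_p} < \con{a_q,b_q}$. Since $W(p,q)$ is a fixed finite lattice (the chains $C_p$ and $C_q$ together with the seven added elements shown in Figure~\ref{F:Frame+W}), this is in principle a finite computation, and the real content is organizing it cleanly rather than invoking deep theory.

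First I would identify the prime intervals of $W(p,q)$ and the join-irreducible congruences they generate, using that every join-irreducible congruence of a finite lattice is of the form $\con{\fp}$ for a prime interval $\fp$. I would trace the forcing relation $\fp \Rightarrow \fq$ (perspectivity and its transitive closure) among these prime intervals to determine which $\con{\fp}$ coincide. The geometry of $W(p,q)$ is designed precisely so that the prime intervals around $[a_p,b_p]$ and those around $[a_q,b_q]$ generate distinct congruences, while the seven added elements force $\con{a_p,b_p} \leq \con{a_q,b_q}$ (this is the stated purpose of $W(p,q)$ in Section~\ref{S:flag}, where adding these elements ``ensures that $\con{a_p,b_p} \leq \con{a_q,b_q}$''). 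So I would check: (i) collapsing $[a_p,b_p]$ does not force collapsing $[a_q,b_q]$, giving strictness $\con{a_p,b_p}<\con{a_q,b_q}$; (ii) collapsing $[a_q,b_q]$ does force collapsing $[a_p,b_p]$, giving $\con{a_p,b_p}\leq\con{a_q,b_q}$; and (iii) no \emph{other} prime interval generates an internal congruence different from these two.

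For step (iii), the key tool is Lemma~\ref{L:nonisolating}: any congruence that is not internal equals $\one$. So to show the only internal congruences are the two named ones, I would verify that every prime interval $\fp$ of $W(p,q)$ that is not perspective into $[a_p,b_p]$ or $[a_q,b_q]$ in fact generates a congruence collapsing $o$ or $i$ into a nontrivial block, hence by Lemma~\ref{L:nonisolating} generates $\one$, which is not internal. Equivalently, I would show that the only prime intervals whose generated congruence is internal are those forcing-equivalent to $[a_p,b_p]$ or $[a_q,b_q]$. Concretely, the seven added elements should each be complemented or otherwise positioned so that collapsing any prime interval touching them drags $o$ or $i$ into a larger block. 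Then the internal join-irreducible congruences are exactly $\con{a_p,b_p}$ and $\con{a_q,b_q}$, and since internal congruences form the interval below the largest internal congruence, and these two are comparable by (i)--(ii), the full list of internal congruences is just these two.

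I expect the main obstacle to be step (iii): being sure the enumeration of prime intervals is exhaustive and that no added-element prime interval secretly generates a \emph{new} internal congruence rather than collapsing to $\one$. This requires reading the precise covering relations from Figure~\ref{F:W}, which the text references but does not transcribe, so the whole argument depends on the diagram's geometry being exactly as the author intends. The strictness in (i) is the second delicate point: I must confirm that the perspectivities propagate only in the direction $[a_q,b_q]\Rightarrow[a_p,b_p]$ and not back, which is exactly what distinguishes $W(p,q)$ from a gluing that would identify the two congruences; the author's remark that $W(p,q)$ is engineered (unlike $S(p,q)$) so that formula~\eqref{E:Lequ} holds suggests the asymmetry is built into which of the seven elements sit above versus below the two intervals.
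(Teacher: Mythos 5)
Your overall strategy coincides with the paper's: Lemma~\ref{L:Scongs} is proved there by exactly the finite computation you outline --- first verify that $\con{a_p,b_p}$ and $\con{a_q,b_q}$ are internal, distinct, and comparable, then check all 19 prime intervals $[x,y]$ of $W(p,q)$ and show that each $\con{x,y}$ is either not internal or equal to one of these two, and finally conclude, as you do, that comparability of the two join-irreducible internal congruences leaves no room for any other internal congruence (the internal congruences together with $\zero$ form an ideal of $\Con W(p,q)$, so every internal congruence is a join of join-irreducible ones). So your steps (i)--(iii) and the closing argument are the paper's proof in slightly different words.

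One step of your write-up is wrong as stated, though it is inessential and easily removed. You invoke Lemma~\ref{L:nonisolating} to conclude that a prime interval of $W(p,q)$ which collapses $o$ or $i$ into a nontrivial block generates $\one$. But Lemma~\ref{L:nonisolating} is a statement about the full lattice $L$, not about a component lattice: its proof rests on Lemma~\ref{L:big}, which places every element of $L$ in a simple $\set{o,i}$-sublattice isomorphic to $\SM 3$ built from the universal complement $u$ and elements of $C$ --- none of which lie inside $W(p,q)$. Inside $W(p,q)$ a non-internal congruence need not be $\one$, and the paper is careful on exactly this point: its sample computation asserts only that $\con{h_{p,q}, b_p}$ is \emph{not internal}, because $\cng f_{p,q}=i(\con{h_{p,q}, b_p})$, not that it equals $\one$. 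Fortunately you never need the detour: once a prime interval is shown to collapse $o$ or $i$ with another element, the congruence it generates fails the definition of internal congruence outright, which is all that your step (iii) requires. Deleting the appeal to Lemma~\ref{L:nonisolating} (here and in your final paragraph, where ``collapsing to $\one$'' should read ``failing to be internal'') leaves a correct proof that matches the paper's.
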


\begin{figure}[bth]
\centerline{\includegraphics[scale=0.9]{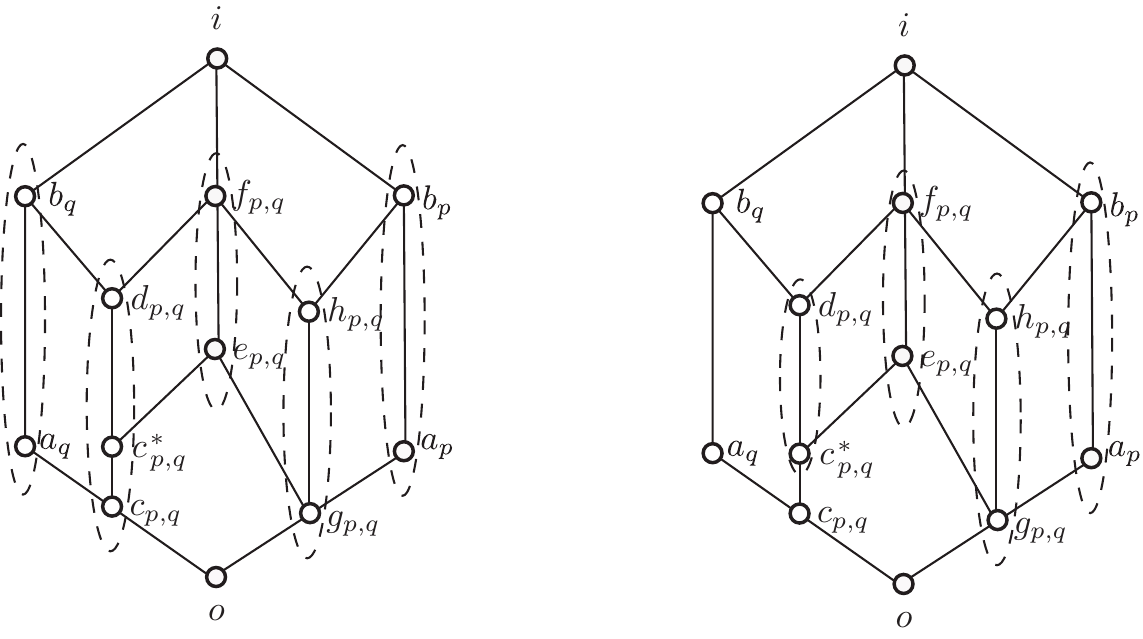}}
\caption{The internal congruences of $W(p, q)$
for $p < q \in P$}\label{F:Wcongs}
\end{figure}

\begin{proof}

An easy computation. 

First, check that Figure \ref{F:Wcongs} correctly
describes the two join-irreducible internal con\-gruences
$\con{a_p, b_p}$ and $\con{a_q, b_q}$.

Then, check all 19 prime intervals $[x, y]$
and show that $\con{x, y}$ is either not an internal con\-gruence 
or equals $\con{a_p, b_p}$ or $\con{a_q, b_q}$.
For instance, \[\con{e_{p,q}, f_{p,q}} = \con{a_p, b_p}\]
and $\con{h_{p,q}, b_p}$ is not an internal con\-gruence because  
\[\cng f_{p,q}=i(\con{h_{p,q}, b_p}).\]
The other 17 cases are similar. 
 
Finally, note that the two 
join-irreducible internal congruences 
we found are comparable, 
so there are no other internal congruences.
\end{proof}

\subsection{The congruences of flag lattices}\label{S:flagcongs}

There is only one important congruence of a flag lattice.
It is $\con{a_p, b_p} = \con{c_i, c_{i+1}}$. 
It identifies $\con{a_p, b_p}$ with $\con{c_i, c_{i+1}}$,
where $[c_i, c_{i+1}]$ is of color~$p$.

Note that there may be many flag lattices 
containing a given $a_p, b_p$.

\subsection{The congruences of $L$}
Let $\bga$ be an internal congruence of $L$. 
Then for every $A \in \CompLat C$,
we associate with $\bga$ the internal congruence $\bga_A$ of $A$,
the restriction of $\bga$ to $A$.
These congruences are compatible, in the following sense.

Let $A, B \in \CompLat C$,
let $\bgb$ be internal congruences on $A$,
and let $\bgg$ be internal congruence on $B$. 
We call the congruences $\bgb$ and $\bgg$ \emph{compatible}, 
if either $A$ and $B$ are not adjacent 
or they are adjacent and $\bgb_{A \ii B} = \bgg_{A \ii B}$.

\begin{lemma}\label{L:comp}%Lemma~\ref{L:comp}
Let $\bga(A)$ be an internal congruence for 
every $A \in \CompLat C$.
Let us assume that the congruences 
$\bga(A)$ are compatible.
Then there is an internal congruence $\bga$ of $L$
such that $\bga(A) = \bga_A$ for $A \in \CompLat C$.
This congruence $\bga$ is unique.
\end{lemma}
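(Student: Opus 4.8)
The plan is to define $\bga$ directly as a union of the given local congruences and then verify that this union is a congruence relation, using Lemma~\ref{L:meetirr} and Corollaries~\ref{C:compl} and~\ref{C:adjacent} to control how pairs from different component lattices interact. Concretely, I would set
\[
   \bga = \UUm{\bga(A)}{A \in \CompLat C},
\]
regarding each $\bga(A)$ as a set of ordered pairs of $L$. The compatibility hypothesis is exactly what is needed to make this union well-defined on overlaps: if $\cng x \equiv y\ (\bga(A))$ and $x, y$ both lie in $A \ii B$, then $\cng x \equiv y\ (\bga(B))$ as well, so there is no conflict in how a pair inside an intersection is treated. Uniqueness is then immediate, since any internal congruence $\bga$ with $\bga_A = \bga(A)$ must contain every $\bga(A)$, and conversely a congruence is determined by which pairs it collapses within each component once we know that each component is a sublattice.

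The substantive work is checking that $\bga$ is actually a congruence. Reflexivity and symmetry are clear from the corresponding properties of each $\bga(A)$. For transitivity, suppose $\cng x \equiv y$ and $\cng y \equiv z$; then $\cng x \equiv y\ (\bga(A))$ and $\cng y \equiv z\ (\bga(B))$ for some components $A, B$. If $A = B$ we are done inside that sublattice; otherwise the common element $y$ forces $y \in A \ii B$, and Lemma~\ref{L:meetirr} tells us $A \ii B$ is one of the chains $\set{o,i}$, $C_p$, or $C \uu \set{o,i}$ with every non-$o$ element meet-irreducible. Since $\bga$ is internal, the blocks $\set{o}$ and $\set{i}$ are singletons, which pins down where $x, y, z$ can sit relative to $A \ii B$ and lets me reduce to a case already handled inside a single component, much as in the earlier proof that $\leq$ is transitive.

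The main obstacle — and the step I would spend the most care on — is the substitution property: showing that $\cng x \equiv y\ (\bga)$ implies $\cng x \jj t \equiv y \jj t\ (\bga)$ and dually for meets, for an arbitrary $t \in L$ that need not lie in the same component as $x$ and $y$. The difficulty is that the join $x \jj t$ may be computed across two adjacent components or may collapse to $i$ when the components are not adjacent. Here I would split on whether the component $A$ carrying the pair $\cng x \equiv y$ is adjacent to the component of $t$: in the non-adjacent case Corollary~\ref{C:compl} gives $x \jj t = i = y \jj t$ (both joins are $i$ by complementarity), so the substitution is trivial because $\cng i \equiv i$; in the adjacent case Corollary~\ref{C:adjacent} expresses the joins as $\max(x^U, t^U)$ and $\max(y^U, t^U)$ along the shared $\set{o,i}$-chain $U$, and since $x^U, y^U$ are obtained by pushing $x, y$ up into $U$, the relation $\cng x^U \equiv y^U\ (\bga)$ survives and the maxima with the fixed $t^U$ are handled by ordinary monotonicity of $\bga$ on the chain $U$. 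The meet case is dual. Assembling these, $\bga$ respects both operations, so it is a congruence; its internality is inherited from the internality of each $\bga(A)$.
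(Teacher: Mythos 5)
Your proposal follows the same skeleton as the paper's proof: define $\bga$ as the union of the $\bga(A)$, use compatibility to make this consistent on intersections, observe that uniqueness is forced, and verify the Substitution Properties by splitting on whether the components involved are adjacent, with Corollaries~\ref{C:compl} and~\ref{C:adjacent} controlling joins across components. The non-adjacent case and the chain-monotonicity/convexity part of your adjacent case are fine, and your treatment of transitivity and uniqueness is, if anything, more explicit than the paper's.

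The genuine hole is the pivotal assertion in the adjacent join case that ``the relation $\cng x^U = y^U\ (\bga)$ survives.'' This is the crux of the whole verification, and ``$x^U, y^U$ are obtained by pushing $x, y$ up into $U$'' is not a reason: $z \mapsto z^U$ is a closure map, not a lattice translation, and congruences are not preserved by such maps in general. The claim is true here, but only by a further application of Corollary~\ref{C:adjacent}: reduce to $x \leq y$ in $A - \set{o,i}$ with $y^U < i$; then $x^U \in U - \set{o,i} \ci B - \set{o,i}$, so $y \jj x^U = \max(y^U, x^U) = y^U$, and joining $\cng x = y\ (\bga(A))$ with $x^U$ inside $A$ yields $\cng x^U = y^U\ (\bga(A))$, which compatibility transfers to $B$. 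You also skip the boundary case $x^U < y^U = i$, where $x \jj t < i = y \jj t$ would destroy internality; it must be ruled out, and the same computation does it ($y \jj x^U = i$ would force $\cng x^U = i\ (\bga(A))$, contradicting internality of $\bga(A)$). The paper sidesteps both issues more cleanly: it invokes the Technical Lemma (\cite[Lemma~11]{LTF}, \cite[Theorem 3.1]{CFL2}) to reduce the Join Substitution Property to comparable elements, and then, when $a \jj c, b \jj c \neq i$, both joins land in $U - \set{i} \ci A$ (in effect, $c$ is replaced by $c^U \in A$), so the substitution property of $\bga(A)$ inside $A$ finishes the argument. Finally, your remark that ``the meet case is dual'' is off: $L$ is not self-dual --- by Lemma~\ref{L:meetirr} the elements of the shared chains are meet-irreducible --- and this asymmetry is exactly why the paper can dismiss the Meet Substitution Property as trivial rather than appeal to duality.
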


\begin{proof} 
By compatibility, we can define a binary relation $\bga$ 
as the union of the $\bga(A)$, that is,
\[
   \bga = \UUm{\bga(A)}{A \in \CompLat C}.
\]
By definition, $\bga$ is reflexive and transitive. 
To prove that $\bga$ is a congruence, 
it is sufficient to verify the Substitution Properties.
The Meet Substitution Property is trivial.
By utilizing the Technical Lemma, see \cite[Lemma~11]{LTF}
and \cite[Theorem 3.1]{CFL2}, we only have to do the 
Join Substitution Property for two comparable elements.

So let $a \leq b \in A - \set{o,i}$, where $A$ is
a component lattice of $L$, let $c \in L - \set{o,i}$,
and let $\cng a = b (\bga)$.
By Corollaries~\ref{C:compl} and~\ref{C:adjacent},
we can assume that there is a component lattice $B$ of $L$
such that $c \in B$.  
Let $U = A \ii B$ be the chain shared by $A$ and $B$. 
We assume also that $a \jj c, b \jj c \in L - \set{o,i}$.
Since $b \jj c \neq i$, it follows that $a^U, b^U, c^U < i$.
Therefore, by Corollary~\ref{C:adjacent}, 
$a \jj c, b \jj c \in U - \set{i} \ci A$,
and the congruence $\cng a = b (\bga)$ now follows
because it holds in $A$ for $\bga(A)$.
(By~utilizing the Technical Lemma for Congruences of Finite Lattices, see \cite{gG14b},
we could assume that $a \prec b,c$ in the last paragraph.)
\end{proof}

Now we are ready to describe 
the join-irredcucible congruences of $L$.
Of course, the unit congruence, $\one$, is join-irreducible, generated by any nontrivial interval 
$[0, x]$ and $[x, 1]$, as well as by $[a_1, b_1]$.

\begin{definition}\label{D:def}%Definition~\ref{D:def}
Let $r \in P$ with $r < 1$. 
For $A \in \CompLat C$, define $\bgr(A)$ as follows.

\begin{enumeratei}
\item Let $A = W(p,q)$ for $p < q \in P$. Define
\begin{equation}\label{E:ro}%\eqref{E:ro}
   \bgr(W(p,q)) = 
       \begin{cases}
       \consub{W(p,q)}{a_q,b_q}   
       &\text{for $q \leq r$};\\
       \consub{W(p,q)}{a_p,b_p}   
       &\text{for $q \nleq r$ and $p \leq r$};\\
       \zero           &\text{for $q \nleq r$ and $p \nleq r$}.\\
       \end{cases}
\end{equation}
\item\label{case1} Let $A = \Flag(c_i)$ for $i < n$ with 
$\col [c_i, c_{i+1}] = p$.
Now we define the congruence $\bgr(\Flag(c_i))$
by enumerating the prime intervals 
it collapses in $\Flag(c_i)$\tup{:}
\begin{enumerate}
\item[\tup{(a)}] the five prime intervals of $\con{a_p, b_p}$ 
in $\Flag(c_i)$ provided that $p \leq r$, 
see Figure~\ref{F:Flagcong};
\item[\tup{(b)}] the prime intervals $[c_j, c_{j+1}] \in C$ 
satisfying $\con{c_j, c_{j+1}} \leq r$;
\item[\tup{(c)}] the prime intervals $[c'_j, c'_{j+1}]$ 
with $j < i$ satisfying $\con{c_j, c_{j+1}} \leq r$.
\end{enumerate}
\item Let $A = S$. Then $\bgr(S) = \zero$.
\item Let $A = C_u$. Then $\bgr(C_u) = \zero$.
\end{enumeratei}
\end{definition}

\begin{lemma}\label{L:jicong}%Lemma~\ref{L:jicong}
The congruences $\setm{\bgr(A)}{A \in \CompLat C}$ are compatible.
\end{lemma}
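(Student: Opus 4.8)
The plan is to verify compatibility directly from Definition~\ref{D:def}, checking each pair of adjacent component lattices and confirming that the restrictions of $\bgr(A)$ and $\bgr(B)$ to the shared chain $U = A \ii B$ agree. By Lemma~\ref{L:meetirr}, when at least one of $A, B$ is not a flag lattice, the intersection $A \ii B$ is one of the chains $\set{o,i}$, $C_p$, or $C \uu \set{o,i}$; the remaining case is the intersection of two flag lattices, which by Section~\ref{S:flag} equals $C \uu \set{o,i}$. So the shared chains are always of a very restricted type, and the congruence on such a chain is determined by which of its prime intervals are collapsed. Since the intersection of any two distinct component lattices is a $\set{o,i}$-chain, I would organize the argument around the three possible shapes of $U$.

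First I would dispose of the easy chains. If $U = \set{o,i}$, then $A$ and $B$ are not adjacent, so compatibility is automatic by definition. If $U = C_p = \set{o, a_p, b_p, i}$ for some $p \in P$, then the only nontrivial prime interval relevant to an internal congruence is $[a_p, b_p]$, so I must check that $\bgr(A)$ collapses $[a_p, b_p]$ if and only if $\bgr(B)$ does. From \eqref{E:ro} and part~\eqref{case1}(a) of Definition~\ref{D:def}, the interval $[a_p, b_p]$ is collapsed by $\bgr(A)$ precisely when $p \leq r$, uniformly across all component lattices containing $C_p$: a $W(p,q)$ collapses $[a_p,b_p]$ exactly when $p \le r$ (both the $q\le r$ and the $q\nleq r,\ p\le r$ branches do so), a $W(o,p)$-type lattice $W(p',p)$ with $p$ in the top role collapses it exactly when $p \le r$, and a flag lattice $\Flag(c_i)$ of color $p$ collapses the five prime intervals of $\con{a_p,b_p}$ exactly when $p \le r$. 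Hence all components agree on $[a_p,b_p]$, and the condition ``$p \le r$'' is the single shared criterion.

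The main work, and the step I expect to be the genuine obstacle, is the case $U = C \uu \set{o,i}$, which arises when both $A$ and $B$ contain the full chain $C$ — that is, for two flag lattices, or a flag lattice and a $W$-lattice sharing $C$ if applicable. Here I must verify that the two congruences collapse exactly the same prime intervals $[c_j, c_{j+1}]$ of $C$. This is governed by part~\eqref{case1}(b) of Definition~\ref{D:def}, which collapses $[c_j, c_{j+1}]$ precisely when $\con{c_j, c_{j+1}} \leq r$; crucially this rule depends only on the color of the prime interval and on $r$, not on which flag lattice we are in. The subtlety is that part~\eqref{case1}(c) also collapses certain \emph{primed} intervals $[c'_j, c'_{j+1}]$, but these lie inside the flag-specific part of $\Flag(c_i)$ and do not belong to $C \uu \set{o,i}$, so they are irrelevant to the restriction to $U$. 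Thus I would argue that the restriction of each $\bgr(\Flag(c_i))$ to $C$ collapses $[c_j,c_{j+1}]$ iff $\con{c_j,c_{j+1}} \le r$, independently of $i$, so any two such restrictions coincide. Finally, for $S$ and $C_u$ we have $\bgr(S) = \bgr(C_u) = \zero$; since $S$ and $C_u$ meet every other component only in $\set{o,i}$ or a chain $C_p$ on which $\bgr$ is trivial, and both assign $\zero$, compatibility with these is immediate. Collecting these cases establishes that the family $\setm{\bgr(A)}{A \in \CompLat C}$ is compatible.
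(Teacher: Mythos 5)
Your proof is correct, and it verifies the same underlying fact as the paper -- that the criterion for collapsing a shared prime interval depends only on the color of that interval and on $r$, never on which component lattice you are in -- but you organize the verification differently. The paper fixes a prime interval $[x,y] \ci A \ii B$ that is collapsed by $\bgr(A)$ and argues by cases on whether $A$ is a flag lattice or a W lattice, concluding that the only such shareable interval is $[a_p,b_p]$; your proof instead runs over the three possible shapes of the intersection $U = A \ii B$ (namely $\set{o,i}$, $C_p$, and $C \uu \set{o,i}$), exactly as licensed by Lemma~\ref{L:meetirr} and Section~\ref{S:flag}, and checks agreement of the restrictions on each. Your organization buys something concrete: the case $U = C \uu \set{o,i}$ (two flag lattices sharing the chain $C$) is handled explicitly, via the observation that clause~\eqref{case1}(b) of Definition~\ref{D:def} is uniform in $i$ and that the primed intervals of clause~\eqref{case1}(c) lie outside $C$, whereas the paper's Case~1 asserts that $[a_p,b_p]$ is the only prime interval of $\Flag(c_i)$ shareable with another W or flag lattice -- a statement that passes over the prime intervals of $C$, which every pair of flag lattices shares. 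Your treatment of $S$ and $C_u$ is likewise explicit where the paper disposes of them in one line through condition~\eqref{E:sharedcong}. One tiny point you could add for completeness: clause~\eqref{case1}(a) also touches $C$, since the five prime intervals of $\con{a_p,b_p}$ in $\Flag(c_i)$ include $[c_i,c_{i+1}]$; this causes no conflict, because clause~(a) is active exactly when $p \leq r$, which is the same criterion clause~(b) applies to $[c_i,c_{i+1}]$, but it is worth noting so that the restriction of $\bgr(\Flag(c_i))$ to $C$ is computed from both clauses, not from clause~(b) alone.
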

\begin{figure}[bth]%Figure~\ref{F:Flagcong}
\centerline{\includegraphics[scale=1]{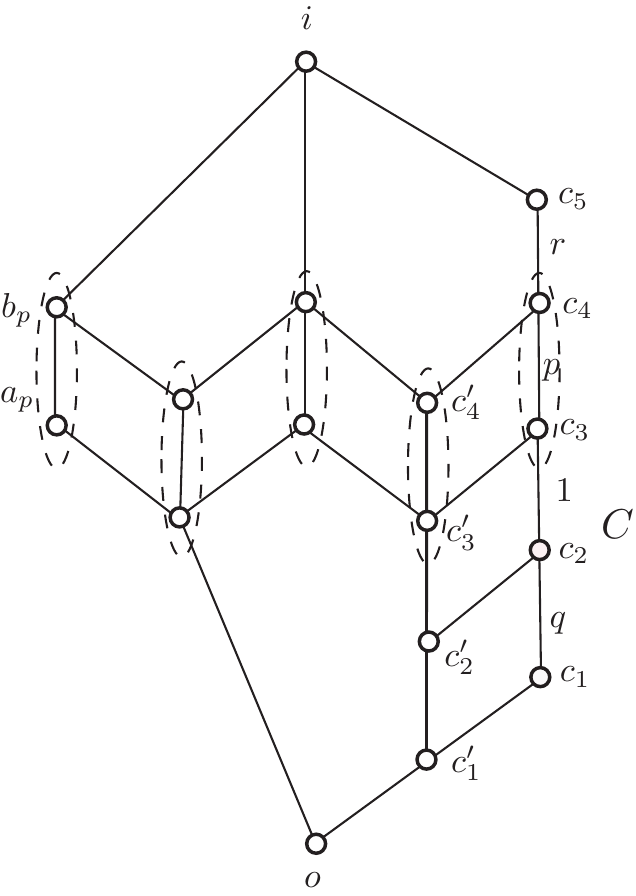}}
\caption{A congruence of $\Flag(c_3)$}\label{F:Flagcong}
\end{figure}

\begin{proof}
Let $A \neq B \in \CompLat C$.
Let $[x, y]$ be a prime interval of $L$ satisfying 
\begin{align}
   &0 < x \prec y < i,\label{E:cov}\\%\eqref{E:cov}
   &[x, y] \ci A \ii B,\label{E:shared}\\%\eqref{E:shared}
   &\cng x = y ({\bgr(A)}).\label{E:sharedcong}%\eqref{E:sharedcong}
\end{align}
It follows from \eqref{E:shared} that both $A$ and $B$
are W lattices or flag lattices. 
We distinguish two cases.

\emph{Case 1.} Let $A$ be the flag lattice $\Flag(c_i)$, 
where $i < n$ and $\col [c_i, c_{i+1}] = p$. 
The only prime interval in $A$ with \eqref{E:cov}
that can be shared with another W lattice or flag lattice
is $[a_p, b_p]$ and it only can be shared with
the W lattice $B = W(p, q)$ for $p < q \in P$ or 
with the W lattice $B = W(q, p)$ for $q < p \in P$.

$\bgr(A)$ restricted to $[x, y]$ is $\zero$ if $p \nleq r$ 
and $\one$ if $p \leq r$ by Definition~\ref{D:def}(i).
Similarly, $\bgr(B)$ restricted to $[x, y]$ 
is $\zero$ if $p \nleq r$ 
and $\one$ if $p \leq r$ by Definition~\ref{D:def}(ii).
So we get compatibility.

\emph{Case 2.} Let $A$ be the W lattice 
$W(p, q)$ for $p < q \in P$. 
The only prime interval in $A$ with \eqref{E:cov} 
and \eqref{E:sharedcong}
that can be shared with another W lattice
is $[a_p, b_p]$ and it only can be shared with
the W lattice $B = W(p, q)$ for $p < q \in P$ or 
with the W lattice $B = W(q, p)$ for $q < p \in P$.
(Sharing it with a flag lattice was discussed in Case 1.)
In both cases we apply \eqref{E:ro} to get compatibility:
if $p \leq r$, then we get $[a_p, b_p]$ collapsed 
by both $\bgr_A$ and $\bgr_B$; 
if $p \nleq r$, then $\bgr_A = \zero$ and $\bgr_B = \zero$.
\end{proof}

By Lemma~\ref{L:comp}, we have the congruence 
$\bgr = \con{a_r, b_r}$ on $L$ so that 
$\bgr_A = \bgr(A)$ for every component lattice $A$ of $L$,

\begin{corollary}\label{C:jicong}%Corollary~\ref{C:jicong}
The join-irreducible congruence of $L$ are the congruences
$\con{a_r, b_r}$ for $r \in P$.
\end{corollary}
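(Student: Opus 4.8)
```latex
\textbf{Proof proposal for Corollary~\ref{C:jicong}.}

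The plan is to show a two-way containment: every congruence
$\con{a_r, b_r}$ with $r \in P$ is join-irreducible, and conversely
every join-irreducible congruence of $L$ is of this form. The forward
direction is already essentially in hand. For $r \in P$ with $r < 1$,
Lemma~\ref{L:jicong} together with Lemma~\ref{L:comp} produces an
internal congruence $\bgr$ whose restriction to each component lattice
is $\bgr(A)$ as in Definition~\ref{D:def}; by construction this
restriction agrees with $\con{a_r, b_r}$ on the sublattice $C_r$, so
$\bgr = \con{a_r, b_r}$. The case $r = 1$ is handled separately, since
$\con{a_1, b_1} = \one$ is join-irreducible as noted in the remark
preceding Definition~\ref{D:def}. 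So first I would record that each
$\con{a_r, b_r}$ is a well-defined internal (or unit) congruence and is
join-irreducible, the latter because its image in each $A \in \CompLat
C$ is either $\zero$ or a single join-irreducible internal congruence of
$A$ by Lemma~\ref{L:Scongs}, and a join of two proper congruences cannot
reproduce this pattern.

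Next I would prove the reverse inclusion: if $\bga$ is a nonzero
join-irreducible congruence of $L$, then $\bga = \con{a_r, b_r}$ for some
$r \in P$. If $\bga$ is not internal, then $\bga = \one = \con{a_1, b_1}$
by Lemma~\ref{L:nonisolating}, so we may assume $\bga$ is internal. The
key structural fact is that $\bga$ is determined by its restrictions
$\bga_A$ to the component lattices, and since each $\bga_A$ is an
internal congruence of $A$, Lemma~\ref{L:Scongs} (for W lattices) and the
analysis of flag-lattice congruences in Section~\ref{S:flagcongs} force
$\bga_A$ to be a join of congruences of the form $\con{a_p, b_p}$. Thus
$\bga$ is the join over all component lattices of the congruences
$\con{a_p, b_p}$ it collapses. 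For $\bga$ to be join-irreducible, the set
$R = \setm{p \in P}{\cng a_p = b_p (\bga)}$ must have a unique maximal
element $r$; otherwise $\bga$ splits as a proper join along two maximal
colors. Since $\con{a_p, b_p} < \con{a_q, b_q}$ whenever $p < q$ in $P$
(Lemma~\ref{L:Scongs}, propagated across the W lattices), the downward
closure of $R$ in $P$ is generated by $r$, and $\bga = \con{a_r, b_r}$.

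The main obstacle I anticipate is the reverse inclusion, specifically
showing that the restriction of an arbitrary internal congruence $\bga$
to a flag lattice cannot introduce a congruence that is join-irreducible
yet not of the form $\con{a_r, b_r}$. The flag lattices are where the
chain $C$ is glued in, and the prime intervals $[c_j, c_{j+1}]$ of $C$
carry colors that must be reconciled with the $\con{a_p, b_p}$ via the
identification $\con{a_p, b_p} = \con{c_i, c_{i+1}}$ from
Section~\ref{S:flagcongs}. I would therefore argue that whenever $\bga$
collapses $[c_j, c_{j+1}]$, it must also collapse $[a_p, b_p]$ for the
color $p = \col[c_j, c_{j+1}]$, so no new join-irreducibles arise from
$C$; the colors of $C$ only contribute to the color sets, and hence to
\emph{joins} of the $\con{a_r, b_r}$, never to new atoms of $\J{\Con
L}$. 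Granting this, the correspondence $r \mapsto \con{a_r, b_r}$ is a
bijection from $P$ onto $\J{\Con L}$, which is exactly the corollary.
```
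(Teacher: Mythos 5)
Your proposal follows essentially the route the paper intends: the corollary is given no standalone proof there, being meant to fall out of exactly the pieces you assemble --- Lemma~\ref{L:nonisolating} to dispose of non-internal congruences, Lemma~\ref{L:Scongs} and the flag-lattice analysis of Section~\ref{S:flagcongs} to pin down the internal congruences of the components, and Definition~\ref{D:def} with Lemmas~\ref{L:jicong} and~\ref{L:comp} to glue these into the congruences $\bgr = \con{a_r, b_r}$. In particular, your treatment of the ``main obstacle'' (collapsing $[c_j, c_{j+1}]$ forces collapsing $[a_p, b_p]$ for $p = \col [c_j, c_{j+1}]$, so the chain $C$ contributes joins but no new join-irreducibles) is precisely the content of Definition~\ref{D:def}(ii) and Case~1 of the proof of Lemma~\ref{L:jicong}.

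Two of your local justifications are shaky, though both are repairable. First, the inference ``$\bgr$ agrees with $\con{a_r, b_r}$ on $C_r$, so $\bgr = \con{a_r, b_r}$'' is a non sequitur: for $r < q$ in $P$, the congruences $\con{a_r, b_r}$ and $\con{a_q, b_q}$ also agree on the chain $C_r$ (both are internal and both collapse $[a_r, b_r]$), yet they are distinct. The correct argument is the one the paper asserts after Lemma~\ref{L:jicong}: $\bgr$ collapses $[a_r, b_r]$, so $\con{a_r, b_r} \leq \bgr$, while in each component the family of Definition~\ref{D:def} collapses only intervals that any congruence collapsing $[a_r, b_r]$ is forced to collapse, so $\bgr \leq \con{a_r, b_r}$. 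Second, your argument that $\con{a_r, b_r}$ is join-irreducible (``a join of two proper congruences cannot reproduce this pattern'') is hand-waving; the clean statement is that in a finite lattice a congruence generated by a prime interval is join-irreducible, and $a_r \prec b_r$ in $L$. This standard fact also lets you delete the ``unique maximal element of $R$'' discussion, which as written leans on strict inequalities $\con{a_p, b_p} < \con{a_q, b_q}$ in $L$ that are only certified later, by Corollary~\ref{C:allcong}: once you know an internal join-irreducible congruence $\bga$ equals the finite join of the $\con{a_p, b_p}$ with $p \in R$, join-irreducibility immediately makes $\bga$ equal to one of the joinands, with no comparability analysis needed.
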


\begin{corollary}\label{C:allcong}%Corollary~\ref{C:allcong}
The map
\[
   r \mapsto \con{a_r, b_r} \text{ for $r \in P$}
\]
uniquely extends to an isomorphism $\gf$ 
between $D$ and $\Con L$.
\end{corollary}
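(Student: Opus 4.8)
The plan is to deduce Corollary~\ref{C:allcong} from the fundamental theorem of finite distributive lattices (Birkhoff duality), taking Corollary~\ref{C:jicong} as the essential input. Since $\Con L$ is automatically a finite distributive lattice, and since a finite distributive lattice is determined up to isomorphism by the ordered set of its join-irreducible elements---with every order-isomorphism between such ordered sets extending uniquely to a lattice isomorphism---it suffices, because $P = \J D$ by hypothesis, to show that the map $r \mapsto \con{a_r, b_r}$ is an order-isomorphism of $P$ onto $\J(\Con L)$.

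First I would record that this map is \emph{onto} $\J(\Con L)$: this is exactly Corollary~\ref{C:jicong}, the case $r = 1$ being covered by the remark preceding Definition~\ref{D:def} that $\con{a_1, b_1} = \one$. The crux is then the single equivalence
\[
   \con{a_p, b_p} \leq \con{a_r, b_r} \iff p \leq r \qquad (p, r \in P).
\]
Granting it, injectivity follows since $r \neq s$ forces $r \not\leq s$ or $s \not\leq r$, whence $\con{a_r, b_r} \neq \con{a_s, b_s}$, and order-isomorphy is immediate. To prove the equivalence I would use that $\con{a_p, b_p} \leq \con{a_r, b_r}$ if{}f $\bgr = \con{a_r, b_r}$ collapses $[a_p, b_p]$, since $\con{a_p, b_p}$ is the least congruence collapsing that interval, and then read off from Definition~\ref{D:def} which intervals $[a_p, b_p]$ are collapsed by $\bgr$: by formula~\eqref{E:ro}, the interval $[a_p, b_p]$ is collapsed in $W(p, q)$ precisely when $p \leq r$ (for $q \leq r$ the larger congruence $\con{a_q, b_q}$ is used, and $p < q \leq r$ again yields $p \leq r$), while the flag clauses of Definition~\ref{D:def}(ii) collapse the prime intervals of $\con{a_p, b_p}$ only when $p \leq r$; hence $\bgr$ collapses $[a_p, b_p]$ if{}f $p \leq r$.

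With the order-isomorphism $P \iso \J(\Con L)$ established, Birkhoff duality yields an isomorphism $\gf\colon D \to \Con L$ extending $r \mapsto \con{a_r, b_r}$. Uniqueness is automatic: in a finite distributive lattice every element is the join of the join-irreducibles below it, so any isomorphism agreeing with the prescribed map on $P = \J D$ is forced on all of $D$.

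I expect the order-reflecting half of the displayed equivalence---that $r \not\leq s$ implies $\con{a_r, b_r} \not\leq \con{a_s, b_s}$---to be the main obstacle. The forward direction $p \leq r \Rightarrow \con{a_p, b_p} \leq \con{a_r, b_r}$ is built into the construction through the lattices $W(p, q)$ (Lemma~\ref{L:Scongs}), but the reverse rests on confirming, via Definition~\ref{D:def}, that the explicitly prescribed congruence really is the principal congruence $\con{a_r, b_r}$ (as asserted after Lemma~\ref{L:jicong}) and that it therefore leaves $[a_p, b_p]$ unseparated for every $p \not\leq r$.
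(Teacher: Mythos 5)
Your proposal is correct and takes essentially the same route the paper intends: Corollary~\ref{C:allcong} is deduced from Corollary~\ref{C:jicong} together with the fundamental theorem of finite distributive lattices (Birkhoff duality), the only further content being the check---left implicit in the paper---that $r \mapsto \con{a_r, b_r}$ preserves and reflects order, which you read off correctly from Definition~\ref{D:def}, formula~\eqref{E:ro}, and Lemma~\ref{L:Scongs}. No gaps beyond what the paper itself takes for granted (notably the identification $\bgr = \con{a_r, b_r}$ asserted after Lemma~\ref{L:jicong}, which you use exactly as the paper does).
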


\subsection{Principal congruences of $L$}\label{S:princecongs}
To complete the proof of Theorem~\ref{T:main2},
it remains to prove the following two results.

\begin{lemma}\label{L:laststep1}%Lemma~\ref{L:laststep1}
Under the isomorphism~$\gf$ 
of Corollary~\ref{C:allcong}, 
if $q \in Q \ci D$, then $\gf q$ is a principal congruence of $L$.
\end{lemma}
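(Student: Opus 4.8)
The plan is to show that every element $q \in Q$ corresponds, under $\gf$, to a congruence of the form $\con{c_x, c_y}$ for a suitable interval $[c_x, c_y]$ of the chain $C$, which is visibly principal. Since $Q$ is chain representable, by definition each $q \in Q$ equals $\Rep[c_x, c_y] = \JJ \colset[c_x, c_y]$ for some interval of the colored chain $C$. The congruence $\gf q$ is then the join $\JJm{\con{a_p, b_p}}{p \in \colset[c_x, c_y]}$ over the colors appearing in that interval. My strategy is to prove that this join is generated by the single interval $[c_x, c_y]$, i.e.\ that
\begin{equation*}
   \con{c_x, c_y} = \JJm{\con{a_p, b_p}}{p \in \colset[c_x, c_y]} = \gf q.
\end{equation*}

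First I would establish the inclusion $\con{c_x, c_y} \leq \gf q$. For each prime interval $[c_j, c_{j+1}]$ inside $[c_x, c_y]$, its color is some $p \in \colset[c_x, c_y]$, and the flag lattice $\Flag(c_j)$ forces $\con{c_j, c_{j+1}} = \con{a_p, b_p}$ (this is exactly the purpose of the flag construction, recorded in Section~\ref{S:flagcongs}). Summing over the prime intervals of $[c_x, c_y]$ gives $\con{c_x, c_y} = \JJm{\con{c_j, c_{j+1}}}{[c_j, c_{j+1}] \ci [c_x, c_y]}$, and each summand is one of the $\con{a_p, b_p}$ with $p \in \colset[c_x, c_y]$, so the join is at most $\gf q$. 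For the reverse inclusion $\gf q \leq \con{c_x, c_y}$, I would show that each generator $\con{a_p, b_p}$ with $p \in \colset[c_x, c_y]$ is collapsed by $\con{c_x, c_y}$: since $p$ is a color appearing in $[c_x, c_y]$, there is a prime interval $[c_j, c_{j+1}] \ci [c_x, c_y]$ with $\col[c_j, c_{j+1}] = p$, and the flag lattice identity $\con{a_p, b_p} = \con{c_j, c_{j+1}} \leq \con{c_x, c_y}$ then gives the claim. Combining the two inclusions yields the displayed equality, so $\gf q = \con{c_x, c_y}$ is principal.

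The main obstacle I anticipate is the first inclusion, specifically justifying that $\con{c_x, c_y}$ decomposes as the join of the congruences generated by its constituent prime intervals and that those coincide with the intended $\con{a_p, b_p}$. In a general lattice $\con{c_x, c_y}$ need not equal this join, but here $[c_x, c_y]$ is a chain, so $\con{c_x, c_y}$ is exactly the join of the $\con{c_j, c_{j+1}}$ over its covering pairs; this is a standard fact for congruences generated by an interval of a chain. The delicate point is ensuring the flag-lattice identification $\con{c_j, c_{j+1}} = \con{a_p, b_p}$ genuinely holds in the full lattice $L$ and not merely inside the sublattice $\Flag(c_j)$, which is where I would lean on Corollary~\ref{C:allcong} and the compatibility machinery of Lemma~\ref{L:comp}: the global join-irreducible congruences are precisely the $\con{a_r, b_r}$, so any interval congruence computed from flag data lifts correctly to $L$. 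Once these identifications are in place, the equality $\gf q = \con{c_x, c_y}$ is immediate and the lemma follows.
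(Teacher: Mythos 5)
Your proposal is correct and follows essentially the same route as the paper: the paper's (much terser) proof likewise takes $q = \JJ \colset[x,y]$ from chain representability and asserts directly that $\gf q$ is the principal congruence $\con{x,y}$. The equality $\con{x,y} = \JJm{\con{a_p,b_p}}{p \in \colset[x,y]}$ that you verify explicitly---via the decomposition of a chain-interval congruence into prime-interval congruences and the flag-lattice identifications $\con{c_j,c_{j+1}} = \con{a_p,b_p}$---is exactly the reasoning the paper leaves implicit.
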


\begin{proof}
Let $q \in Q$. Since $Q$ is represented
by the chain $C$ colored by $P = \J D$, there is an interval
$[x,y]$ of $C$ such that $q = \JJ \colset[x,y]$.
So $\gf$ maps $q \in Q$ to a principal congruence $\con{x,y}$ of $L$.
\end{proof}

\begin{lemma}\label{L:laststep2}%Lemma~\ref{L:laststep2}
Let $d \in D$ and let $\gf d$ be a principal congruence of $L$.
Then $d \in Q$.
\end{lemma}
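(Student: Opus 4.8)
The plan is to prove the converse direction of the representability characterization: starting from a principal congruence $\gf d = \con{x,y}$ of $L$, I must exhibit an interval of the colored chain $C$ whose color set joins to $d$, thereby placing $d \in Q$. Since $\gf d$ is principal, there exist $x \le y \in L$ with $\con{x,y} = \gf d$, and I may assume $d \neq \one$ (as $\one \in Q$ automatically, being generated by $[a_1,b_1]$, $[0,x]$, or $[x,1]$), so by Lemma~\ref{L:nonisolating} the congruence $\gf d$ is internal and $x,y \in L - \set{o,i}$. The first step is to reduce to the case where $[x,y]$ lives inside a single component lattice: I would argue that $x$ and $y$ can be chosen so that $x \prec y$ is a prime interval, using that every join-irreducible congruence is generated by a prime interval, and then locate the component lattice (a $W(p,q)$, a $\Flag(c_i)$, $S$, or $C_u$) containing this prime interval via the decomposition \eqref{E:L} and Lemma~\ref{L:meetirr}.

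Next I would compute, for each type of component lattice, exactly which $d \in D$ arise as $\JJ$ of a collapsed interval there, using the congruence descriptions in Corollary~\ref{C:jicong} and Definition~\ref{D:def}. For prime intervals sitting in $S$ or $C_u$ the restriction of any internal congruence is determined by Definition~\ref{D:def}(iii)--(iv), giving only $\zero$ for the proper join-irreducibles, so these force $d = \one$ and are harmless. For a prime interval inside a $W(p,q)$, Lemma~\ref{L:Scongs} shows the only nontrivial internal congruences there are $\con{a_p,b_p}$ and $\con{a_q,b_q}$, which equal $\gf p$ and $\gf q$ respectively, both already in $Q$ since $\J D \ci Q$. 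The genuinely informative case is when the generating prime interval lies along the chain $C \ci \Frame C$: there $\con{c_j, c_{j+1}} = \gf(\col[c_j,c_{j+1}])$ by the flag-lattice construction of Section~\ref{S:flagcongs}, and so an interval $[c_j, c_k]$ of $C$ generates precisely the congruence corresponding to $\JJ \colset[c_j,c_k]$, which lies in $Q$ by the definition of chain representability.

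The main obstacle, and the heart of the argument, is handling a principal congruence whose generating prime interval $[x,y]$ is \emph{not} confined to a single chain of $C$ but whose join-irreducible decomposition spans several incomparable colors. Concretely, I must rule out that $\con{x,y}$ could equal a join $\gf p_1 \jj \cdots \jj \gf p_k = \gf(p_1 \jj \cdots \jj p_k)$ for a join $d = p_1 \jj \cdots \jj p_k \notin Q$. The key structural fact to exploit is that the only way to \emph{principally} realize such a join in $L$ is through an interval of $C$ (this is precisely the ``role of $C$'' illustrated in Section~\ref{S:PbP}): outside $C$, Corollaries~\ref{C:compl} and~\ref{C:adjacent} force any single interval to interact with at most one adjacent component, bounding the color content of its generated congruence by a single $\gf p$ or by $\one$. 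Thus I would show that if $\con{x,y}$ is internal, proper, and principal, then its set of collapsed $C$-prime-intervals is exactly $\colset[c_j,c_k]$ for some subinterval of $C$, whence $d = \JJ\colset[c_j,c_k] \in Q$ by hypothesis. The delicate verification will be that no ``spurious'' principal congruence arises from an interval straddling two flag lattices or a flag lattice and a $W$ lattice; here I would invoke $\Flag(c_i) \ii \Flag(c_j) = C \uu \set{o,i}$ to confine all cross-flag interaction back onto $C$, closing the argument.
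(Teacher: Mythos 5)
Your opening reduction is where the proof breaks. From the fact that $\gf d$ is principal you cannot pass to a generating prime interval: a principal congruence need not be join-irreducible, so ``every join-irreducible congruence is generated by a prime interval'' gives nothing here. Indeed, if $\gf d$ were join-irreducible, then $d \in \J D \ci Q$ and the lemma is trivial; the entire content of Lemma~\ref{L:laststep2} lies in the join-reducible case, exactly the case your reduction discards. Your third paragraph acknowledges this, but what you offer there is an assertion, not an argument: the claims that outside $C$ an interval can generate only some $\gf p$ or $\one$, and that the $C$-prime-intervals collapsed by a principal internal congruence form $\colset[c_j,c_k]$ for a subinterval of $C$, are precisely what needs proof, and the appeal to Corollaries~\ref{C:compl} and~\ref{C:adjacent} and to $\Flag(c_i) \ii \Flag(c_j) = C \uu \set{o,i}$ does not supply it.

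The missing idea is the ``crucial formula'' \eqref{E:Lequ}: the order on $L$ is defined so that $x \leq y$ in $L$ if{}f $x \leq y$ holds in a \emph{single} component lattice. Consequently the generating interval of a principal congruence never straddles components at all --- there is one component lattice $A$ with $x, y \in A$ --- so the difficulty your last paragraph is organized around simply does not arise, and the proof becomes a case analysis on $A$. If $A$ is a W lattice, $S$, or $C_u$, then any interval of length at least $2$ in $A$ contains a prime interval generating $\one$, so $d = 1 \in Q$. If $A$ is a flag lattice $\Flag(c_i)$, inspection of its diagram shows that either some prime subinterval generates $\one$, or $x, y \in C$, or $x = u'$ and $y = v'$ for some $u, v \in C$ (the primed copy of the chain inside the flag lattice, a case your sketch overlooks entirely); in the latter two cases $d = \JJ\colset[x,y]$, respectively $d = \JJ\colset[u,v]$, and this lies in $Q$ by the definition of chain representability.
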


\begin{proof}
Let $d \in D$ and let $\gf d = \con{x,y}$, 
where $x \leq y \in L$. If $x = y$, then $d = 0 \in Q$.
If $x \prec y$ in $L$, then $\gf d = \con{a_p,b_p}$,
where $p \in P \ci Q$.

Finally, let $[x, y]$ be of length at least $2$, that is,
$x = z_0 \prec z_1 \prec \dots \prec z_k = y$, where $2 \leq k$.
By the construction of $L$, there is a 
component lattice $A$ of $L$, such that $x, y \in A$.
If $A$ is a W lattice, or $S$, or $C_u$, 
then $\con{z_i, z_{i+1}} = \one$, for some $i < k$,
so $d = 1 \in P \ci Q$.

Finally, let $A$ be a flag lattice $\Flag(c_i)$ for $i < n$.
By inspecting the diagram of~$\Flag(c_i)$ 
(Figure~\ref{F:Flag} and Figure~\ref{F:Flagcong}),
we conclude that one of the following three cases occurs:

\begin{enumeratei}
\item $\con{z_i, z_{i+1}} = \one$ for some $i < k$;
\item $x, y \in C$;
\item there are elements $u, v \in C$ 
so that $x = u'$ and $y = v'$.
\end{enumeratei}

In Case (i), it follows that $d = 1 \in P \ci Q$.

In Case (ii), we conclude that 
\[
   d = \JJ \colset[x, y]
\]
and so $d \in Q$ by the definition of chain representability,
see Section~\ref{S:repr}.

In Case (iii), we argue as in Case (ii) 
with the interval $[u, v]$.
\end{proof}

\subsection{Discussion}\label{S:Discussion}
%Section~\label{S:Discussion}

Problem 22.1 of my book \cite{CFL2} 
(see Section~\ref{S:Background}) remains unsolved.

In G. Gr\"atzer and H. Lakser~\cite{GLa}, 
we proved some relevant results:
\begin{enumeratei}
\item For a finite distributive lattice $D$,
the set $Q = D$ is representable.
\item If a finite distributive lattice $D$
has a join-irreducible unit element,
then $Q =  \J D \uu \set{0,1}$ is representable.
\item Let $D$ be the eight-element Boolean lattice
with atoms $a_1, a_2, a_3$. 
Then the set $Q = \set{0,a_1, a_2, a_3, 1} \ci D$ 
is not representable.
\end{enumeratei}

We also introduced in G. Gr\"atzer and H. Lakser~\cite{GLa}
the following concept.
Let us call a finite distributive lattice $D$ 
\emph{fully representable}, if every $Q \ci D$ 
is representable provided that $\set{0,1} \uu \J D \ci Q$.
In G. Gr\"atzer and H. Lakser \cite{GLa},
we observe that every fully representable 
finite distributive lattice is planar.

G. Cz\'edli~\cite{gCb} and~\cite{gCc} combine to give 
a deep characterization 
of fully representable finite distributive lattices as follows:

A finite distributive lattice $D$ is 
fully principal congruence representable if{}f
$D$ is planar and it has at most one join-reducible dual atom.

\end{document}